\def\ds{\displaystyle}
\newcommand{\bb}{{{\bf{  b}}}}
\newcommand{\bw}{{{\bf{  w}}}}
\newtheorem{lemma}{Lemma}
\begin{document}

\begin{frontmatter}



\title{ALE-SUPG finite element method for convection-diffusion problems in time-dependent domains: Conservative form}


\author{Sashikumaar~Ganesan\corref{cor1}}
\ead{sashi@serc.iisc.in}

\author{Shweta~Srivastava\fnref{fn2}}
\ead{shweta@nmsc.serc.iisc.ernet.in}


\cortext[cor1]{Corresponding author}

\fntext[fn2]{This work is partially supported by the National Mathematics Initiative (NMI) at Indian Institute of Science, Bangalore and by the CSIR, India}

\address{Computational Science, SERC, Indian Institute of Science, Bangalore  560012, India.}

 

\begin{abstract}
A Streamline Upwind Petrov-Galerkin (SUPG) finite element method for transient convection-diffusion-reaction equation in time-dependent domains is proposed. In particular, a convection dominated transient scalar problem is considered. The time-dependent domain is handled by the arbitrary Lagrangian-Eulerian (ALE) approach, whereas the SUPG finite element method is used for the spatial discretization. Further, the first order backward Euler and the second order Crank-Nicolson methods are used for the temporal discretization. It is shown that the stability of the semidiscrete (continuous in time) conservative ALE-SUPG equation is independent of the mesh velocity, whereas the stability of the fully discrete problem is unconditionally stable for implicit Euler method and is only conditionally stable for Crank-Nicolson time discretization. Numerical results are presented to show the influence of the SUPG stabilization parameter in a time-dependent domain. Further, the proposed numerical scheme is applied to a boundary/layer problem in a time-dependent domain.
\end{abstract}

\begin{keyword}
Transient convection-diffusion-reaction \sep boundary and interior layers 
equation\sep time dependent domains\sep Streamline upwind Petrov-Galerkin (SUPG)
 \sep finite element methods \sep arbitrary Lagrangian-Eulerian approach  


\end{keyword}

\end{frontmatter}


\section{Introduction}
This paper deals with the numerical approximation of a transient convection-diffusion equation in time-dependent domains. It is well known that the standard Galerkin approach consists spurious oscillations in the numerical solution of a convection dominated equation. Therefore, stabilized methods have been used for convection dominated problems to suppress the oscillations, and to enhance the stability of the numerical solution. However, the influence of the convection term depends on the choice of the approach that we use to handle the domain movement. In the Eulerian approach, a fixed mesh is used for solving the equations in time dependent domains, and the moving boundaries/interfaces are captured using interface capturing/tracking methods such as Level-set~\cite{JAS96}, Volume of Fluid~\cite{HIR74}, Immersed boundary/Front-tracking method~\cite{PES02,TRY01}, etc.

Alternatively, the arbitrary Lagrangian-Eulerian (ALE) approach~\cite{DON83,BO04} has also been used when the application demands higher accuracy and/or sharp moving boundaries/interfaces, for instance, in fluid-structure interaction applications. The ALE approach introduces a convective mesh velocity term into the  model equation, and it alters the overall convective field of the problem~\cite{NOB01}. 
 However, the mesh velocity need not be identical or even close to the convective velocity in many practical applications. Therefore, the model problem can still be convection dominated, and can have boundary/interior layers even after reformulating the model equations into an ALE form. It is of our interest in this paper. 

 Streamline Upwind Petrov-Galerkin (SUPG) is one of the popular stabilization method for convection dominated problems~\cite{BH82,BUR10,JOH10,GANW10} in fixed domains.
Other popular stabilization methods such as  Galerkin least-squares~\cite{hughes89}, edge stabilization~\cite{BUR04}, continuous interior penalty~\cite{BUR06}, local projection stabilization~\cite{GAN10}, orthogonal sub-grid scale~\cite{codina2000} have also been  proposed in the literature for fixed domains, see~\cite{ROSS08} for an overview. A comparison of the SUPG method with other stabilization methods for a problem in fixed domain can be found in~\cite{codina98}. An adaptive SUPG method for a transient problem in fixed domain has been analyzed in~\cite{volker014}. In ~\cite{volkerSchmeyer08}, a comparative study of different SUPG stabilization parameters has been done. However, to the best of the authors knowledge, the SUPG has not been studied for equations in time dependent domains.
Nevertheless, an analysis of the orthogonal sub-grid scale method with the ALE approach for the solution of the  transient convection-diffusion equation in a time-dependent domain has been presented in~\cite{codina06}.
 Recently, a higher order discontinuous Galerkin (dG) method in time for convection-diffusion equation in deformable domains, with ALE framework to handle the domain movement has been proposed in ~\cite{Bon013,BonKyza013}.

In this work, we analyze the SUPG finite element method for a convection dominated transient convection-diffusion equation in a time-dependent domain. We first obtain the conservative ALE formulation for the transient equation, and then apply the SUPG discretization in space. We analyze two different, (i) the first order backward Euler and, (ii) the second order Crank-Nicolson time discretizations for the inconsistent SUPG form.  

The paper is organized as follows. In section 2, the transient convection-diffusion equation in a time-dependent domain and its ALE formulation are given. The spatial discretization using the SUPG finite element method is also presented in this section. Further, the stability of the semidiscrete problem (continuous in time) is derived in section 3. Section~4 is devoted to the stability estimates of the fully discrete problem obtained with backward Euler and Crank-Nicolson time discretization. Stability estimates for the conservative ALE SUPG form is derived in this section. Finally, the numerical studies are presented in Section~5.

\section{Model problem and its ALE formulation}
Let $ \rm{T}$ be a given time, and $t\in[0,\rm{T}]$. We consider a linear time-dependent convection-diffusion-reaction equation  
\begin{equation}\label{model}
\begin{array}{rcll}
\ds\frac{\partial u}{\partial t} - \epsilon\Delta u +  \mathbf b \cdot\nabla u + cu &=& f  &\qquad \text{ in} \,\ (0,\rm{T}] \times \Omega_t,\\
                 u &=& 0 &\qquad   \text{ on} \,\ [0,\rm{T}] \times \partial\Omega_t,\\
                 u(0,x) &=& u_{0}(x) & \qquad  \text{  in} \,\ \Omega_0, 
\end{array}
\end{equation}
where $\Omega_t\subset R^{d},~d=1,2,3$ is a time-dependent deforming/moving domain with the time dependent boundary $\partial\Omega_t$. Here,  $u(t,x)$ is an unknown scalar function, $\epsilon$ is a constant diffusion coefficient, $ \mathbf b(t,x)$ is a given convective velocity, $c(t,x)$  is a reaction function, $f(x)$ is a source term and  $u_{0}(x)$ is a  given initial data. We  assume that  $\Omega_t$ is bounded for each  $t\in[0,\rm{T}]$ with Lipschitz boundary, and there exists a constant $\mu$ such that
\begin{equation}\label{assump1}
0 < \mu  \leq   \left(c - \frac{1}{2}\nabla \cdot \mathbf{b}\right)(x), \quad \forall~x \in \Omega_t.\\
\end{equation}
Furthermore, we assume that the given data are sufficiently smooth.  
We now derive the arbitrary Lagrangian-Eulerian form of the considered model problem~\eqref{model}. Let $\hat\Omega$ be a reference domain, and define a family of bijective ALE mappings
\[
\mathcal{A}_t:\hat{\Omega} \rightarrow \Omega_t,  \qquad  \mathcal{A}_t(Y)= x(Y, t), \qquad t\in(0,\rm{T}).
\]
The reference domain  $\hat\Omega$ can simply be the initial domain $\Omega_0$ or the previous time-step domain when the deformation of the domain is large. Next, for a  function $v\in C^0({{\Omega_t}})$ on the Eulerian frame, we define their corresponding function $\hat v\in C^0({{\hat\Omega} })$ on the ALE frame as 
\[
\hat v :\hat\Omega\times (0, {\rm{T}}) \rightarrow \mathbb{R}, \qquad 
 \hat{v}:=v\circ \mathcal{A}_t,\qquad \text{with} \qquad \hat{v}(Y, t) = v(\mathcal{A}_t(Y), t).
\]
Further, the time derivative on the ALE frame is defined as 
\[
\ds\frac{\partial v}{\partial t} \Big|_{Y}:\Omega_t\times (0, {\rm{T}}) \rightarrow \mathbb{R}, \qquad 
\ds\frac{\partial v  }{\partial t} \Big|_{Y }(x,t) = \ds\frac{\partial \hat v }{\partial t}(Y,t), \qquad 
 Y= \mathcal{A}_t^{-1}(x).
 \]
We now apply the chain rule to the time derivative of $v\circ\mathcal{A}_t$ on the ALE frame to get
\[
\ds\frac{\partial v}{\partial t} \Big|_{Y} = \ds\frac{\partial v}{\partial t} (x,t)
 + \ds\frac{\partial x}{\partial t}\Big|_{Y}\cdot\nabla_x v  = \ds\frac{\partial v}{\partial t}  
 + \ds\frac{\partial \mathcal{A}_t(Y)}{\partial t}  \cdot\nabla_x v= \ds\frac{\partial v}{\partial t}  
 + \bw\cdot\nabla_x v,
\]
where $\bw$ is the domain velocity. Using this relation in the model problem~\eqref{model}, we get
\begin{equation}\label{ALEmodel}
\ds\frac{\partial u}{\partial t} \Big|_{Y} - \epsilon\Delta u +  (\mathbf b -\bw)\cdot\nabla u + cu = f.
\end{equation}
The conservative ALE form of equation is given 
\begin{equation}\label{consermodel}
{\left. \ds\frac{\partial (u J_{A_{t}})}{\partial t} \right|_Y} + J_{A_{t}} \left[ - \epsilon\Delta u +  (\bb-\bw) \cdot\nabla u + (c - \nabla \cdot \bw) u \right]  = J_{A_{t}}f  
\end{equation}
The main difference between~\eqref{model} and~\eqref{consermodel} is the additional domain velocity in the ALE form that account for the deformation of the domain, see for explanation \cite{NOB01}.
 
\subsection{Variational form of the conservative ALE equation}
To derive the variational form, let us define the functional space for the equation~\eqref{consermodel} with the ALE mapping:
\[
 V = \left\{v\in  H_{0}^{1}(\Omega_{t}),  ~~v:\Omega_t\times (0,T] \rightarrow \mathbb{R}, ~~ v=\hat{v}  \circ A_{t}^{-1}, ~~ \hat{v} \in H_{0}^{1}(\hat{\Omega}) \right\}.
\]
Now, multiplying the equation~\eqref{consermodel} with a test function $v\in V$, integrate over $\Omega_t$, and after applying integration by parts to the higher order derivative term, the variational form of the equation~\eqref{consermodel} reads:\\

\noindent For given $\mathbf b$, $\bw$, $c$, $u_0$ and $f$, find $u\in V$ such that for all $ t\in(0,T]$
\begin{equation}\label{weakALE}
\frac{d }{dt}\left( u,~v\right) + (\epsilon\nabla u, ~\nabla v) + \left(  (\mathbf b -\bw) \cdot \nabla u ,~v\right) + \left((cu - \nabla \cdot \bw), ~v \right) = (f,~v),  \qquad  v\in V.
\end{equation}
Here,  $(\cdot,~\cdot)$ denotes the $L^2-$inner product in $\Omega_t$. The stability analysis for the standard Galerkin finite element discretization~\eqref{weakALE} can be seen in~\cite{BO04,NOB01,MAC12}. Here, we will therefore concentrate on the SUPG discretization of the conservative ALE form~\eqref{weakALE}.

\subsection{SUPG discretization of the ALE equation}
It is well-known that the standard Galerkin finite element discretization of convection-diffusion equation induces spurious oscillations in the numerical solution in convection dominated cases. Note that the convective term in the ALE form~\eqref{weakALE} is $(\mathbf b -\bw)$, and the instabilities and spurious oscillations are not expected when the mesh velocity is same as the convective velocity (pure Lagrangian form). However, this is not the case in the ALE form, and in practice, the mesh velocity need not be in the same direction as the convective velocity. Therefore,  to circumvent the instabilities and to suppress the spurious oscillations, a stabilization method has to be used in practical applications, in particular, for problems with boundary and interior layers. One of the simple and most popular stabilization method for convection dominated problems in fixed domains is the SUPG method, and is considered here. 

Let $\mathcal{T}_{h,t}$ be the collection of simplices obtained by triangulating  the time-dependent domain $\Omega_t$. We denote the diameter of the cell $K  \in\mathcal{T}_{h,t}$    by $h_{K,t}$ and  the global mesh size in the triangulated  domain $\Omega_{h,t}$   by $h_t:=\max\{h_{K,t}~:~ K \in\mathcal{T}_{h,t}\}$.   Suppose $V_{h}\subset V$  is a  conforming finite element (finite dimensional) space. Let $\phi_h:= \{\phi_i(x)\} $, $i=1,2,...,\mathcal{N},$ be the finite element basis functions of $V_{h}$. The discrete finite element space $V_{h}$ is then defined by  
\[
  V_{h}  = 
\left\{u_h:~u_h(t,x)= \sum_{i=1}^{\mathcal{N}}u_{i}(t)
\phi_i(x) ;\quad u_{i}\in\mathbb{R}\right\}\subset
H_0^{1}(\Omega_t).
\]
We next define the discrete ALE mapping $\mathcal{A}_{h,t}(Y)$ and the discrete mesh velocity $\bw_h$ in space. We use the piecewise linear Lagrangian finite element space
\[
 \mathcal{L}^{1}(\hat\Omega) = \left\{  \psi\in H^1(\hat \Omega):  \psi|_K\in{\rm P_1}(\hat K)  \text{ for all } \hat K\in  \hat \Omega_h \right\},
\]
where ${\rm P_1}$ is a set of polynomials of degree less than or equal to one  on $\hat K$. Using this linear space, we define the semidiscrete ALE mapping in space for each $t\in[0,{\rm{T})}$ by
\begin{equation} \label{discALE}
 \mathcal{A}_{h,t}:\hat{\Omega_h} \rightarrow \Omega_{h,t}.
\end{equation}
Further, the discrete (continuous in time) mesh velocity $\bw_h(t,Y)\in \mathcal{L}^{1}(\hat\Omega)^d$ in the   ALE frame for each $t\in[0,{\rm{T})}$ is defined   by
\[
 \hat\bw_h(t,Y)=   \sum_{i=1}^{\mathcal{M}}\bw_{i}(t)
\psi_i(Y) ;\quad \bw_{i}(t)\in\mathbb{R}^d. 
\]
Here, $\bw_{i}(t)$ denotes the mesh velocity of the $i^{th}$ node of simplices at time $t$ and $\psi_i(Y) $, $i=1,2,...,\mathcal{M},$ are the  basis functions of $\mathcal{L}^{1}(\hat\Omega)$. We then define the semidiscrete mesh velocity in the Eulerian frame as 
\[
  \bw_h(t,x) =  \hat\bw_h  \circ\mathcal{A}^{-1}_{h,  t}(x).
\]
Using the above finite element spaces and applying the inconsistent SUPG finite element discretization to the ALE form~\eqref{weakALE}, the  semi-discrete form in space
of~\eqref{weakALE} reads:\\

\noindent for given $u_h(0)=u_{h,0}$,  $\mathbf b$, $\bw_h$, $c$, $f$, and $\Omega_0$, find $ u_h(t,x)\in   V_{h}$ such that for all $ t\in(0,T]$

\begin{equation} \label{semidisc_conser}
\begin{aligned} 
\frac{d}{dt} \left(u_h , v_h\right)    & + a_{SUPG}(u_h,v_h)    -  \int_{\Omega_{h,t}} \nabla \cdot( \bw_h  u_h)~ v_h~  dx \\ 
   & = \int_{\Omega_{h,t}} fv_h~   dx 
+ \sum_{K  \in \mathcal{T}_{h,t}} \delta_{K}  \int_{K} f ~(\mathbf{b-w}_h) \cdot\nabla v_h ~ dK
\end{aligned}
\end{equation}
where
\begin{align}\label{supg}
 a_{SUPG}(u,v)  &= \epsilon(\nabla u, \nabla v) +  (\mathbf{b} \cdot \nabla u, v) + (c u, v) \nonumber\\
 &+\sum_{K \in \mathcal{T}_{h,t}} \delta_{K} (- \epsilon\Delta u + (\mathbf{b-w}_h) \cdot\nabla u + cu, (\mathbf{b-w}_h ) \cdot \nabla v)_K
\end{align}
Here,  $(\cdot,~\cdot)$ denotes the $L^2-$inner product in   $\Omega_{h,t}$ and  $\delta_{K}$ is a local stabilization parameter. Further,   $u_{h,0}\in V_{h}$ is defined as the $L^2$-projection of the initial value $u_0$ onto $V_{h}$. 
 
\begin{lemma} \label{lemma1}
Coercivity of $a_{SUPG}(\cdot , \cdot)$: Let the discrete form of the assumptions~\eqref{assump1} be satisfied. Further,  assume that the SUPG parameters satisfy
\begin{equation}\label{assump2}
 \delta_{K} \le  \frac{\mu_{0}}{2||c||_{K,\infty}^{2}}, \qquad    \delta_{K} \le  \frac{h_{K}^{2}}{2\epsilon c_{inv}^{2}},
\end{equation}
where $c_{inv}$ is a constant used in inverse inequality. Then, the SUPG bilinear form  satisfies
\[
 a_{SUPG}(u_h,u_h) \geq \frac{1}{2}|||u_h|||^{2},
\]
where the mesh dependent norm is defined as
\[
 |||u|||^{2} = \left(\epsilon| u|^{2}_{1}  + \sum_{K \in \mathcal{T}_{h,t}} \delta_{K} ||(\mathbf{b-w}_h) \cdot \nabla u||^{2}_{0,K} + \mu||u||_{0}^{2}\right).
\]
\end{lemma}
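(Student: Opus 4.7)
The plan is to substitute $v_h = u_h$ into the SUPG bilinear form and split the terms into three groups: the Galerkin part, the purely positive SUPG part, and the SUPG cross terms that need to be controlled by the positive contributions. I would begin by handling the Galerkin (non-SUPG) part. The diffusion term $\epsilon(\nabla u_h,\nabla u_h)$ directly yields $\epsilon|u_h|_1^2$. For $(\mathbf{b}\cdot\nabla u_h,u_h) + (cu_h,u_h)$, I would integrate by parts on the convective term (using $u_h|_{\partial\Omega_t}=0$) to rewrite this combination as $((c-\tfrac{1}{2}\nabla\cdot\mathbf{b})u_h,u_h)$ and then use the discrete analogue of assumption~\eqref{assump1} to bound it below by $\mu\|u_h\|_0^2$.

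Next, I would isolate the "good" SUPG term $\sum_K \delta_K \|(\mathbf{b}-\mathbf{w}_h)\cdot\nabla u_h\|_{0,K}^2$ coming from the streamline contribution, and estimate the two remaining SUPG cross terms
\begin{equation*}
\sum_K \delta_K\bigl(-\epsilon\Delta u_h,(\mathbf{b}-\mathbf{w}_h)\cdot\nabla u_h\bigr)_K, \qquad \sum_K \delta_K\bigl(cu_h,(\mathbf{b}-\mathbf{w}_h)\cdot\nabla u_h\bigr)_K
\end{equation*}
by Young's inequality $|ab|\le \tfrac14 a^2 + b^2$, so that each contributes $\tfrac14 \delta_K \|(\mathbf{b}-\mathbf{w}_h)\cdot\nabla u_h\|_{0,K}^2$ (which together consume at most half of the good SUPG term) plus a remainder of the form $\delta_K\epsilon^2\|\Delta u_h\|_{0,K}^2$ and $\delta_K\|c\|_{K,\infty}^2\|u_h\|_{0,K}^2$.

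The key step is then to absorb these remainders using the hypotheses~\eqref{assump2}. For the Laplacian remainder I would invoke the inverse inequality $\|\Delta u_h\|_{0,K}\le c_{inv}h_K^{-1}|u_h|_{1,K}$ and use $\delta_K \le h_K^2/(2\epsilon c_{inv}^2)$ to show $\delta_K\epsilon^2\|\Delta u_h\|_{0,K}^2 \le \tfrac{\epsilon}{2}|u_h|_{1,K}^2$, i.e. at most half the diffusion contribution. For the reaction remainder I would use $\delta_K \|c\|_{K,\infty}^2 \le \mu_0/2$ to bound $\delta_K\|c\|_{K,\infty}^2\|u_h\|_{0,K}^2 \le \tfrac{\mu}{2}\|u_h\|_{0,K}^2$, consuming at most half of the mass contribution (identifying $\mu_0$ with $\mu$).

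Finally I would collect the surviving halves $\tfrac{\epsilon}{2}|u_h|_1^2 + \tfrac{\mu}{2}\|u_h\|_0^2 + \tfrac12\sum_K \delta_K\|(\mathbf{b}-\mathbf{w}_h)\cdot\nabla u_h\|_{0,K}^2$, which is precisely $\tfrac12 |||u_h|||^2$. The main obstacle — or at least the only place real care is needed — is the choice of weights in Young's inequality: they must be chosen so that the sum of the two streamline-norm pieces extracted from the cross terms does not exceed the single available copy of $\sum_K \delta_K\|(\mathbf{b}-\mathbf{w}_h)\cdot\nabla u_h\|_{0,K}^2$, while the two non-streamline remainders still fit exactly under the constraints on $\delta_K$ given in~\eqref{assump2}.
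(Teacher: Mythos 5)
Your proposal is correct and follows essentially the same route as the paper: bound the Galerkin part below by $\epsilon|u_h|_1^2+\mu\|u_h\|_0^2$ via integration by parts and the discrete form of~\eqref{assump1}, split off the positive streamline term, control the two SUPG cross terms by Young's inequality so that together they consume at most $\tfrac12\sum_K\delta_K\|(\mathbf{b}-\mathbf{w}_h)\cdot\nabla u_h\|_{0,K}^2$, and absorb the $\epsilon^2\|\Delta u_h\|_{0,K}^2$ and $\|c\|_{K,\infty}^2\|u_h\|_{0,K}^2$ remainders into half of the diffusion and reaction contributions using the inverse inequality and the two conditions in~\eqref{assump2}. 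This is exactly the paper's argument, merely written out with the Young weights made explicit.
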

\begin{proof}
 Using the assumption~\eqref{assump1} in~\eqref{supg}, we get
\begin{align}
 a_{SUPG}(u_h,u_h) \ge \epsilon|u_h|^{2}_{1}  & + \mu||u_h||_{0}^{2}   + \sum_{K \in \mathcal{T}_{h}} \delta_{K} ||(\mathbf{b-w}_h) \cdot \nabla u_h||^{2}_{0,K} \nonumber \\
\label{lemeq1}
  & + \sum_{K \in \mathcal{T}_{h,t}} \delta_{K} (- \epsilon \Delta u_h +  cu_h, (\mathbf{b-w}_h) \cdot \nabla u_h)_{K}
\end{align}
Considering the last term in the above inequality, we have
\begin{align}
  &\left|\sum_{K \in \mathcal{T}_{h,t}}\delta_{K} (- \epsilon\Delta u_h +  cu_h,  (\mathbf{b-w}_h) \cdot \nabla u_h)_{K}\right| \nonumber \\
&\qquad \leq \sum_{K \in \mathcal{T}_{h,t}} \left[\epsilon^{2}\delta_{K}||\Delta u_h||^{2}_{0,K} + \delta_{K} c^2||u_h ||^{2}_{0,K} +\frac{1}{2}\delta_{K}\left||(\mathbf{b-w}_h) \cdot \nabla u_h\right||^{2}_{0,K} \right] \nonumber \\
 &\qquad \leq \frac{1}{2}\left[\epsilon |u_h|_{1,K}^{2} + \mu ||u_h||_{0,K}^{2} + \delta_{K}||(\mathbf{b-w_h})\cdot \nabla u_h||_{0,K}^{2}\right] \nonumber \\
\label{lemeq2}
&\qquad \leq \frac{1}{2} |||u_h|||^{2} 
\end{align}
Here, the inverse inequality
\[
  || \Delta u_h ||_{0,K}  =  c_{inv} h_{K}^{-1}|u_h|_{1,K}, \,\ \forall\,\ u_h  \in V_h.
\]
has been used in the diffusive term. Note that the inverse inequality and the second assumption on $\delta_k$ in~\eqref{assump2} can be omitted when piecewise linear finite elements are used. Using the estimate~\eqref{lemeq2} in~\eqref{lemeq1}, the coercivity is proved. 
\end{proof}
\section{Stability of the semidiscrete (continuous in time) ALE-SUPG problem in space} 
\subsection{Stability of the semidiscrete (continuous in time) conservative ALE-SUPG form}
In the case of conservative form \eqref{consermodel}, we can not take $\psi_h = u_h$, for the stability of semi discrete scheme, since two functions can have different time evolution. We can express $u_h$ as a linear combination of test functions with time dependent unknown coefficients as
\begin{equation*}
 u_h(x,t) = \sum_{i \in N}u_i(t) \psi_i(x,t) 
\end{equation*}
Since the functions in reference domain doesn't depends on time, 
\begin{equation*}
 \left.\frac{\partial u_h}{\partial t} \right \arrowvert_{Y}(x,t) = \sum_{i \in N}\psi_i(x,t) \frac{du_i}{dt}(t).
\end{equation*}
The finite element semi discrete approximation of the equation reads as,
\begin{eqnarray*} 
&\ds \frac{d}{dt} \int_{ {\Omega_t}} u_h  \psi_h ~dX +  \int_{ {\Omega_t}} \epsilon\nabla u_h \cdot\nabla  \psi_h~ dX +  \int_{ {\Omega_t}}  \bb \cdot\nabla u_h ~\psi_h~  dX + \int_{ {\Omega_t}} c  u_h~ \psi_h~  dX \nonumber\\
&\ds+ \sum_{K \in \mathcal{T}_{h}} \delta_{K} \left( - \epsilon\Delta u_h + (\mathbf{b-w_h}) \cdot\nabla u_h + cu_h, (\mathbf{b-w_h})\cdot\nabla \psi_h \right)- \int_{ {\Omega_t}}  \nabla \cdot (\bw_h ~ u_h )~\psi_h~  dX\nonumber \\
 &\ds = \int_{ {\Omega_t}} f\psi_h~  dX + \sum_{K \in \mathcal{T}_{h}} \delta_{K} (f, (\mathbf{b-w_h})\cdot\nabla \psi_h)_{K}
\end{eqnarray*}
Taking $\psi_h = \psi_i$ and multiplying the equation by $u_i(t)$ we get,
\begin{align*}
& \ds u_i(t)\frac{d}{dt} \int_{ {\Omega_t}} u_h  \psi_i ~dX +  \int_{ {\Omega_t}} \epsilon\nabla u_h \cdot\nabla(u_i(t)  \psi_i)~ dX +  \int_{ {\Omega_t}}  \bb \cdot\nabla u_h ~u_i(t)\psi_i~  dX + \int_{ {\Omega_t}} c  u_h~ u_i(t)\psi_i~  dX\nonumber\\
&+ \sum_{K \in \mathcal{T}_{h}} \delta_{K} \Big( - \epsilon\Delta u_h + (\mathbf{b-w_h}) \cdot\nabla u_h + cu_h, (\mathbf{b-w_h})\cdot\nabla u_i(t) \psi_i \Big)- \int_{ {\Omega_t}}  \nabla \cdot (\bw_h ~ u_h )~u_i(t)\psi_i~  dX \nonumber\\
 & = \int_{ {\Omega_t}} f~u_i(t)\psi_i~  dX + \sum_{K \in \mathcal{T}_{h}} \delta_{K} (f, (\mathbf{b-w_h})\cdot\nabla (u_i(t)\psi_i))_{K}
\end{align*}
The first term can be written as,
\begin{equation*} 
\begin{aligned} 
  \ds u_i(t)\frac{d}{dt} \int_{ {\Omega_t}} u_h  \psi_i ~dX& = \frac{d}{dt}\int_{ {\Omega_t}} u_h u_i(t) \psi_i dX - \int_{ {\Omega_t}} u_h \psi_i \frac{du_i(t)}{dt}~dX\nonumber\\
&=\frac{d}{dt}\int_{ {\Omega_t}} u_h u_i(t) \psi_i dX - \int_{ {\Omega_t}} u_h \left.\frac{\partial \psi_i u_i(t)}{\partial t}\right|_Y~dX
\end{aligned}
\end{equation*}
Summing over $i$ to get 
\begin{align*}
 \frac{d}{dt} ||u_h||^2_{L_2(\Omega_t)}& - \int_{ {\Omega_t}} u_h \left.\frac{\partial u_h}{\partial t}\right|_Y~dX+  \epsilon||\nabla u_h ||^2_{L_2(\Omega_t)} +  \int_{ {\Omega_t}}  \bb \cdot\nabla u_h ~u_h~  dX - \int_{ {\Omega_t}}  \nabla \cdot (\bw_h ~ u_h )~u_h~  dX\nonumber\\
&+ c  ||u_h||^2_{L_2(\Omega_t)}  dX+ \sum_{K \in \mathcal{T}_{h}} \delta_{K} \Big( - \epsilon\Delta u_h + (\mathbf{b-w_h}) \cdot\nabla u_h + cu_h, (\mathbf{b-w_h})\cdot\nabla u_h \Big)\nonumber\\
 & = \int_{ {\Omega_t}} f~u_h~  dX + \sum_{K \in \mathcal{T}_{h}} \delta_{K} (f, (\mathbf{b-w_h})\cdot\nabla u_h)_{K},
\end{align*}
where the relations
\begin{equation*}
 \int_{\Omega_{h,t}} \left.\frac{\partial u_h}{\partial t} \right \arrowvert_{Y}u_h~ d x = \frac{1}{2}\left( \frac{d}{dt}||u_h||_{0}^2 - \int_{\Omega_{h,t}} u_h^2 \nabla \cdot \mathbf{w_h} d x\right)
\end{equation*}
is used.
Next, applying the Cauchy-Schwartz and Young’s inequalities to the right hand side terms to get
\begin{eqnarray*}
  | (f,u_h )| = \left(\frac{f}{\mu^{1/2}}, \mu^{1/2}u_h\right)   
\leq \frac{1}{\mu}||f||_{0}^{2} +  \frac{1}{4}\mu||  u_h||_{0}^{2}
\end{eqnarray*}
and
\begin{eqnarray*}
 \left| \sum_{K \in \mathcal{T}_{h,t}}  \delta_{K} (f,(\mathbf{b-w}_h) \cdot \nabla u_h)_{K}\right| \leq \sum_{K \in \mathcal{T}_{h,t}}\delta_{K} ||f||_{0}^{2} + \frac{1}{4}\sum_{K \in \mathcal{T}_{h,t}}\delta_{K}||(\mathbf{b-w}_h) \cdot \nabla u_h)||_{0,K}^{2} . 
\end{eqnarray*}
Hence, we obtain
\begin{align*}
 \frac{d}{dt}||u_h||^2_{0} + |||u_h|||^2  
&\le \frac{2}{\mu}||f||_{0}^{2} +  \frac{1}{2}||\mu^{1/2}u_h||_{0}^{2} + 2\sum_{K \in \mathcal{T}_{h,t}}\delta_{K} ||f||_{0}^{2}+ \frac{1}{2}\sum_{K \in \mathcal{T}_{h,t}}\delta_{K}||(\mathbf{b-w}_h) \cdot \nabla u_h)||_{0,K}^{2} \\
&\le \frac{2}{\mu}||f||_{0}^{2}  + 2\sum_{K \in \mathcal{T}_{h}}\delta_{K} ||f||_{0}^{2}+\frac{1}{2} |||u_h|||^{2}
\end{align*}
Finally, integrating the above equation over $(0,T)$, we get the stability estimate for conservative ALE-SUPG scheme
\[
 ||u_h||^2_{0}  +  \frac{1}{2} \int_0^T  |||u_h|||^2 dt  \leq ||u_h(0)||^2_{0} +\frac{2}{\mu} \int_0^T
 ||f||_{0}^{2} ~dt + 2\int_0^T \sum_{K \in \mathcal{T}_{h,t}}\delta_{K} ||f||_{0}^{2}~ dt,
\]
which is independent of mesh velocity field.

\section{Fully discrete scheme} In this section, we present the stability estimates for a fully discrete conservative ALE-SUPG form. In particular, the first order implicit backward Euler and the second order modified Crank-Nicolson time discretizations are analyzed.

\subsection{Discrete ALE-SUPG with Implicit Euler method}
Let $0=t^0<t^1<\dots <t^N={T}$ be a decomposition of the considered
time interval $[0,  {\rm T}]$ into $N$ equal time intervals. Let us denote the uniform time step by $\Delta t = \tau^n   $ = $t^{n}$ - $t^{n-1}$, $1\le n \le N$. Further, let   $u_h^n$ be the
approximation of $u(t^n,x)$ in $V_{h}\subset
H_0^{1}(\Omega_{t^n})$, where $\Omega_{t^n}$ is the deforming domain at time $t=t^n$. We first discretize  the ALE mapping in time using a linear interpolation. We denote the discrete ALE mapping by 
$\mathcal{A}_{h, \Delta t}$, and define it for every $\tau\in[t^n,t^{n+1}]$ by
\[
 \mathcal{A}_{h, \Delta t}(Y) = \frac{\tau-t^n}{\Delta t}\mathcal{A}_{h,t^{n+1}}(Y) +  \frac{t^{n+1}-\tau }{\Delta t}\mathcal{A}_{h,t^{n}}(Y),
\]
where $\mathcal{A}_{h,t}(Y)$ is the time continuous ALE mapping defined in~\eqref{discALE}. Since the ALE mapping is discretized in time using a linear interpolation, we obtain the discrete mesh velocity 
\[
 \hat\bw_h^{n+1}(Y)= \frac{\mathcal{A}_{h,t^{n+1}}(Y) - \mathcal{A}_{h,t^{n}}(Y)}{\Delta t}
\]
as a piecewise constant function in time. Further, we define the mesh velocity on the Eulerian frame as
\[
  \bw_h^{n+1} =  \hat\bw_h^{n+1}  \circ\mathcal{A}^{-1}_{h,\Delta t}(x).
\]
Now, applying the backward Euler time discretization to the semidiscrete problem~\eqref{weakALE}, the fully discrete form of~\eqref{weakALE} reads:\\

\noindent For given $u_h(0)=u_{h,0}$,  $\mathbf b$, $\bw^{n+1}_h$, $c$, $f^{n+1}$ and $\Omega_0$, find $ u^{n+1}_h\in   V_{h}$ in the time interval $(t^{n},t^{n+1})$ such that for all $v_h\in V_{h}$  
\begin{equation} \label{disc}
\begin{aligned} 
& \ds\frac{1}{\Delta t} \left((u^{n+1}_h,  v_h)_{\Omega_{h,t ^{n+1}}} -(u^n_h,  v_h)_{\Omega_{h,t ^{n}}}  \right) + a^{n+1}_{SUPG}(u^{n+1}_h,v_h) - \int_{\Omega_{h,t^{n+1}}} \nabla (\bw^{n+1}_h   u^{n+1}_h)~ v_h~  dx \\ 
   &\qquad = \int_{\Omega_{h,t^{n+1}}} f^{n+1} v_h~   dx 
+ \sum_{K  \in \mathcal{T}_{h,t^{n+1}}} \delta_{K}  \int_{K} f^{n+1} ~(\mathbf{b-w}^{n+1}_h) \cdot\nabla v_h ~ dK,
\end{aligned}
\end{equation}
where
\begin{align*} 
 a^{n+1}_{SUPG}(u_h,v_h)  &= \epsilon(\nabla u_h, \nabla v_h)_{\Omega_{h,t ^{n+1}}} +  (\mathbf{b} \cdot \nabla u_h, v_h)_{\Omega_{h,t ^{n+1}}} + (c u_h, v_h)_{\Omega_{h,t ^{n+1}}} \nonumber\\
 &+\sum_{K \in \mathcal{T}_{h,t^{n+1}}} \delta_{K} (- \epsilon\Delta u_h + (\mathbf{b-w}^{n+1}_h) \cdot\nabla u_h + cu_h, ~(\mathbf{b-w}^{n+1}_h ) \cdot \nabla v_h)_K
\end{align*}
\begin{lemma}(Gronwall lemma)
Let $\Delta t$, $f_0,~A_n,~ B_n,~ C_n $ be given sequences of non-negative numbers for $n \geq 0$ such that the following inequality holds
\[
 A_n + \Delta t \sum_{i=0}^{n} B_i \leq \Delta t  \sum_{i=0}^{n} \gamma_i A_i + \Delta t \sum_{i=0}^{n} C_i + f_0. 
\]
We then have
\[
 A_n + \Delta t \sum_{i=0}^{n} B_i \leq \exp \left(\Delta t \sum_{i=0}^{n} \sigma_i \gamma_i\right)\left[\Delta t \sum_{i=0}^{n} C_i + f_0\right]
\]
 where $\sigma_i = \frac{1}{1-\gamma_i \Delta t}$ and $\gamma_i \Delta t \leq 1$ for all $i=0,\ldots,n$.
\end{lemma}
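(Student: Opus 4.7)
The plan is to establish this discrete Gronwall inequality in two stages: first reduce the hypothesis to a clean recursion by absorbing the $i = n$ term on the right into the left-hand side, and then run an induction in a \emph{product} form that telescopes exactly, passing to the exponential bound only at the end. To set up, since $B_i \ge 0$, define $E_n := A_n + \Delta t\sum_{i=0}^n B_i$ (so that $A_i \le E_i$) and $G_n := f_0 + \Delta t\sum_{i=0}^n C_i$, which is non-decreasing in $n$. The hypothesis becomes $E_n \le \Delta t\sum_{i=0}^n \gamma_i E_i + G_n$; moving the $\gamma_n\Delta t\,E_n$ term to the left and multiplying through by $\sigma_n = 1/(1-\gamma_n\Delta t)$, well-defined under the (strict) condition $\gamma_n\Delta t < 1$, produces the recursion $E_n \le \sigma_n\Delta t\sum_{i=0}^{n-1}\gamma_i E_i + \sigma_n G_n$.

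Next I would prove by induction on $n$ the sharper claim $E_n \le G_n P_n$, where $\mu_i := \sigma_i\gamma_i\Delta t$ and $P_k := \prod_{i=0}^k \sigma_i$ (with $P_{-1} := 1$). Directly from the definition of $\sigma_i$ one checks the exact identities $\sigma_i = 1 + \mu_i$ and $\Delta t\gamma_i = \mu_i/\sigma_i$, which together yield the telescoping relation $\mu_i P_{i-1} = P_i - P_{i-1}$. The base case $n=0$ is immediate: $E_0 \le \sigma_0 G_0 = P_0 G_0$. For the inductive step, substituting $E_i \le G_i P_i$ into the recursion, using the monotonicity $G_i \le G_n$, and converting $\Delta t\gamma_i P_i = \mu_i P_{i-1}$ gives $E_n \le \sigma_n G_n\bigl[\sum_{i=0}^{n-1}(P_i - P_{i-1}) + 1\bigr] = \sigma_n G_n P_{n-1} = G_n P_n$, closing the induction. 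Finally, applying $1 + x \le e^x$ termwise yields $P_n = \prod_{i=0}^n(1+\mu_i) \le \exp\bigl(\sum_{i=0}^n \mu_i\bigr) = \exp\bigl(\Delta t\sum_{i=0}^n \sigma_i\gamma_i\bigr)$, which together with $A_n + \Delta t\sum_{i=0}^n B_i = E_n \le G_n P_n$ gives the stated bound.

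The main obstacle is selecting the correct inductive hypothesis. A direct attempt to prove $E_n \le \exp(\sum_{i\le n}\mu_i)\,G_n$ by induction does not close cleanly, because replacing $\sigma_n$ by $e^{\mu_n}$ in the middle of the argument destroys the telescoping structure of the sum. The product formulation $E_n \le G_n P_n$ works precisely because $\sigma_i - 1 = \mu_i$ is an \emph{exact} identity, so the inner sum collapses to $P_{n-1} - 1$ with no loss; the inequality $1 + x \le e^x$ is therefore invoked only once, at the very end, to pass from the sharp product bound to the advertised exponential one.
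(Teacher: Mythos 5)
The paper states this lemma without proof --- it is the standard discrete Gronwall inequality, quoted as a known auxiliary result --- so there is no argument of the authors' own to compare yours against. Your proof is correct and complete. The reduction to the recursion $E_n \le \sigma_n\Delta t\sum_{i=0}^{n-1}\gamma_i E_i + \sigma_n G_n$ by absorbing the $i=n$ term into the left-hand side is the standard first move, and your induction on the exact product bound $E_n \le G_n\prod_{i=0}^{n}\sigma_i$ is clean: the identities $\sigma_i = 1+\mu_i$ and $\Delta t\,\gamma_i P_i = \mu_i P_{i-1} = P_i - P_{i-1}$ do telescope as claimed, and $1+x\le e^x$ is invoked only once at the end, so you in fact obtain a slightly sharper bound than the advertised exponential one. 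Two small points are worth recording. First, you implicitly use $\gamma_i\ge 0$, both to pass from $\gamma_i A_i\le\gamma_i E_i$ and to ensure $\mu_i\ge 0$ in the final product-to-exponential step; the lemma as printed only asserts non-negativity of $\Delta t$, $f_0$, $A_n$, $B_n$, $C_n$, but $\gamma_i\ge 0$ is clearly intended and holds in the paper's application, where $\gamma_n=\beta_1^n+\beta_2^n$. Second, you correctly note that the hypothesis must be the strict inequality $\gamma_i\Delta t<1$ for $\sigma_i$ to be well defined and positive; the paper's ``$\gamma_i\Delta t\le 1$'' should be read that way, consistent with the strict restriction $\Delta t<1/(\beta_1^n+\beta_2^n)$ imposed at the end of the Crank--Nicolson stability proof.
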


\begin{lemma}\label{stabcons} ({Stability estimates for the Conservative ALE-SUPG form with implicit Euler method})
Let the discrete version of~\eqref{assump1} and the assumption~\eqref{assump2} on $\delta_K$ hold true.  Further, assume that $\delta_K \leq \frac{\Delta t}{4}$, the solution of the conservative problem satisfies
\begin{align*}
||u_h^{n+1}||^2_{L^2 \left( \Omega_{t^{N+1}} \right)}&+ \frac{\Delta t}{2}\sum_{n=0}^{N}|||u_h^{n+1}|||^2_{L^2 \left( \Omega_{t^{n+1/2}} \right)} \\
 &\leq ||u_h^{0}||^2_{L^2 \left( \Omega_{t^{0}} \right)} + \frac{2 \Delta t}{\mu}\sum_{n=0}^{N} ||f^{n+1/2}||^2_{L^2 \left( \Omega_{t^{n+1/2}} \right)} + 2 \Delta t \sum_{K \in \mathcal{T}_{h}}  \delta_{K}\sum_{n=0}^{N}||f^{n+1/2}||^2_{L^2 \left( \Omega_{t^{n+1/2}} \right)}
 \end{align*}
\end{lemma}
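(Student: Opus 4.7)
The plan is to mimic the semidiscrete argument by testing the fully discrete equation \eqref{disc} with $v_h = u_h^{n+1}$, while carefully handling the combined time-difference and conservative mesh-velocity terms. The first task is to rewrite the expression
\begin{equation*}
\frac{1}{\Delta t}\bigl[(u^{n+1}_h, u^{n+1}_h)_{\Omega_{h,t^{n+1}}} - (u^n_h, u^{n+1}_h)_{\Omega_{h,t^n}}\bigr] - \int_{\Omega_{h,t^{n+1}}} \nabla \cdot(\bw^{n+1}_h u^{n+1}_h)\, u^{n+1}_h\,dx
\end{equation*}
into a telescoping form. Because $\mathcal{A}_{h,\Delta t}$ is linear in $t$ on $[t^n,t^{n+1}]$, its Jacobian is affine in time, and pulling back to the reference domain leads naturally to midpoint-averaged quantities. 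A discrete analogue of the identity used in the semidiscrete proof, namely $\int (\partial_t u|_Y)\, u\,dx = \tfrac{1}{2}(d_t\|u\|_0^2 - \int u^2 \nabla\cdot\bw\,dx)$, then produces the telescoping bound $\tfrac{1}{2\Delta t}[\|u^{n+1}_h\|^2_{\Omega_{t^{n+1}}} - \|u^n_h\|^2_{\Omega_{t^{n}}}] + \tfrac{1}{2\Delta t}\|u^{n+1}_h - u^n_h\|^2$, where the last term is nonnegative and discarded. The appearance of the midpoint domain $\Omega_{t^{n+1/2}}$ in the statement reflects exactly this pull-back step.

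Second, I would apply Lemma~\ref{lemma1} to bound $a^{n+1}_{SUPG}(u^{n+1}_h,u^{n+1}_h) \ge \tfrac{1}{2}|||u^{n+1}_h|||^2$, using the assumption \eqref{assump2} on $\delta_K$. The right-hand side terms $(f^{n+1}, u^{n+1}_h)$ and $\sum_K \delta_K(f^{n+1}, (\bb - \bw^{n+1}_h)\cdot \nabla u^{n+1}_h)_K$ are then controlled exactly as in the semidiscrete case by Cauchy--Schwarz and Young's inequality, producing contributions of the form $\tfrac{1}{\mu}\|f\|_0^2 + \tfrac{\mu}{4}\|u^{n+1}_h\|_0^2$ and $\sum_K\delta_K\|f\|_0^2 + \tfrac{1}{4}\sum_K\delta_K\|(\bb-\bw^{n+1}_h)\cdot\nabla u^{n+1}_h\|_{0,K}^2$, so that at least a half of $|||u^{n+1}_h|||^2$ survives on the left.

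Third, I would multiply through by $\Delta t$ and sum from $n=0$ to $N$. The norm-difference terms telescope to $\|u^{N+1}_h\|^2_{\Omega_{t^{N+1}}} - \|u^0_h\|^2_{\Omega_{t^0}}$, yielding the stated inequality directly; the constraint $\delta_K \le \Delta t/4$ is what keeps the accumulated SUPG source term consistent with the prefactor $2\Delta t\sum_K \delta_K$ on the right-hand side. Because the bound involves no factor of the form $\gamma_i A_i$ (the coercivity alone absorbs every $u_h$-dependent contribution), the Gronwall lemma stated above is invoked only trivially, with $\gamma_i \equiv 0$ and the exponential factor equal to one.

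The main obstacle is the first step: establishing the discrete Reynolds-type identity that links the backward-Euler difference of mass integrals on successive domains with the conservative mesh-divergence term, so that the combination yields both a clean telescoping structure and the midpoint evaluation appearing in the statement. This requires carefully exploiting the piecewise-constant-in-time character of $\hat\bw^{n+1}_h$ and the affine behaviour of the Jacobian $J_{\mathcal{A}_{h,\Delta t}}$; everything afterwards is a direct discrete transcription of the semidiscrete estimates already developed in the paper.
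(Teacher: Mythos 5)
Your proposal follows essentially the same route as the paper: test with $v_h=u_h^{n+1}$, use the discrete geometric-conservation-law identity $\|u_h^{n+1}\|^2_{L^2(\Omega_{t^{n+1}})}-\|u_h^{n+1}\|^2_{L^2(\Omega_{t^{n}})}=\Delta t\int_{\Omega_{h,t^{n+1/2}}}|u_h^{n+1}|^2\,\nabla\cdot\mathbf{w}_h\,dx$ to absorb the conservative mesh-divergence term into a telescoping difference, then apply the coercivity of Lemma~\ref{lemma1} together with Cauchy--Schwarz and Young's inequalities and sum over $n$ (no nontrivial Gronwall step is needed, exactly as you note). The only point to make explicit is that the midpoint domain $\Omega_{t^{n+1/2}}$ and the data $f^{n+1/2}$ in the statement come from the scheme itself being posed with midpoint quadrature for the spatial, divergence and source integrals (the GCL-respecting variant of \eqref{disc}), not merely from the pull-back of the mass terms; with that understanding your argument matches the paper's proof.
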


\begin{proof}
Here we assume a piecewise constant in time mesh velocity field and adopt a mid point time integration rule, satisfying the GCL. We will have,
\begin{align*}
 \frac{1}{\Delta t} & \int_{\Omega_{h,t^{n+1}}}u_h^{n+1}  \psi_h ~dX -  \frac{1}{\Delta t} \int_{\Omega_{h,t^{n}}}u_h^{n} \psi_h ~dX + \int_{\Omega_{h,t^{n+1/2}}} \psi_h \nabla \cdot [(\mathbf{b - w_h}) u_h^{n+1}] ~dX\\
& + \int_{\Omega_{h,t^{n+1/2}}} \epsilon \nabla u_h^{n+1} \nabla \psi_h~dX + \int_{\Omega_{h,t^{n+1/2}}}cu_h^{n+1}\psi_h~dX\\
&+\sum_{K \in \mathcal{T}_{h}} \delta_{K} \Big( - \epsilon\Delta u_h^{n+1}  + ((\mathbf{b-w_h})(t^{n+1}) \cdot \nabla u_h^{n+1})   + cu_h^{n+1} , (\mathbf{b-w_h})(t^{n+1}) \cdot \nabla \psi_h \Big) \\
& =\int_{\Omega_{h,t^{n+1/2}}} f^{n+1/2} ~\psi_h ~dX+ \sum_{K \in \mathcal{T}_{h}}  \delta_{K} \int_{\Omega_{h,t^{n+1/2}}} f^{n+1/2} ~(\mathbf{b-w_h})(t^{n+1}) \cdot \nabla \psi_h~dK  
 \end{align*}
Taking $ \psi_h = u_h^{n+1}$ and applying integration by parts to the convective term, we get
\begin{align*}
 \int_{\Omega_{h,t^{n+1}}} u_h^{n+1} &u_h^{n+1} ~dX - \int_{\Omega_{h,t^{n}}}u_h^{n}u_h^{n+1} ~dX + \Delta t ~ a_{SUPG}^{n+1/2}(u_h^{n+1}, u_h^{n+1})- \frac{\Delta t}{2}\int_{\Omega_{h,t^{n+1/2}}} \nabla \cdot \mathbf{w}_h  |u_h^{n+1}|^2dX\\
& \leq \Delta t \int_{\Omega_{h,t^{n+1/2}}} f^{n+1/2} ~ u_h^{n+1} ~dX+ \sum_{K \in \mathcal{T}_{h}}  \delta_{K} \Delta t\int_{\Omega_{h,t^{n+1/2}}} f^{n+1/2} ~(\mathbf{b-w_h}) \cdot \nabla u_h^{n+1}~dK \\
\end{align*}
Using the coercivity of the bilinear form and the Cauchy Schwarz inequality, we get
\begin{align*}
 ||u_h^{n+1}||^2_{L^2 \left( \Omega_{t^{n+1}} \right)} + \frac{\Delta t}{2}|||u_h^{n+1}|||^2_{L^2 \left(\Omega_{t^{n+1/2}}\right)}-& \frac{\Delta t}{2}\int_{\Omega_{h,t^{n+1/2}}} \nabla \cdot \mathbf{w}_h  |u_h^{n+1}|^2dX \\& \leq  \int_{\Omega_{h,t^{n}}}u_h^{n}u_h^{n+1} ~dX + \int_{\Omega_{h,t^{n+1/2}}} \Delta t f^{n+1/2} ~ u_h^{n+1} ~dX\\
&+  \sum_{K \in \mathcal{T}_{h}}  \delta_{K} \int_{\Omega_{h,t^{n+1/2}}} \Delta t f^{n+1/2} ~(\mathbf{b-w_h}) \cdot \nabla u_h^{n+1}~dK \\
&\leq ||u_h^{n+1}||^2_{L^2 \left(\Omega_{t^{n}}\right)}+||u_h^{n}||^2_{L^2 \left(\Omega_{t^{n}}\right)}
+ \frac{\Delta t}{\mu} ||f^{n+1/2}||^2_{L^2 \left(\Omega_{t^{n+1/2}}\right)}\\
&+ \Delta t \sum_{K \in \mathcal{T}_{h}}  \delta_{K}||f^{n+1/2}||^2_{L^2 \left(\Omega_{t^{n+1/2}}\right)},
\end{align*}
where the relation 
\begin{equation*}
 ||u_h^{n+1}||^2_{L^2 \left(\Omega_{t^{n+1}}\right)} - ||u_h^{n+1}||^2_{L^2\left(\Omega_{t^{n}}\right)} = \int_{t^{n}}^{t^{n+1}} \int_{\Omega_{t}} |u_h^{n+1}|^2 \nabla \cdot \mathbf{w}_h~dX \\
=\Delta t \int_{\Omega_{h,t^{n+1/2}}} |u_h^{n+1}|^2 \nabla \cdot \mathbf{w}_h~dX 
\end{equation*}
is used. Hence, we have
 \begin{align*}
 ||u_h^{n+1}||^2_{L^2 \left(\Omega_{t^{n+1}}\right)}&+ \frac{\Delta t}{2}|||u_h^{n+1}|||^2_{L^2 \left( \Omega_{t^{n+1/2}} \right)} 
 \leq ||u_h^{n}||^2_{L^2 \left(\Omega_{t^{n}}\right)}+ 2 \Delta t \sum_{K \in \mathcal{T}_{h}}  \delta_{K}||f^{n+1/2}||^2_{L^2 \left(\Omega_{t^{n+1/2}}\right)} + \frac{2 \Delta t}{\mu} ||f^{n+1/2}||^2_{L^2 \left(\Omega_{t^{n+1/2}}\right)}
 \end{align*}
Finally, summing over all   time steps, we get the estimate.
\end{proof}

\subsection{Discrete ALE-SUPG with Crank-Nicolson method} We next consider the modified Crank-Nicolson method which is basically Runge-Kutta method of order $2$. For an equation
\begin{align*}
 \frac{du(t)}{dt} = f(u(t),t),~~~ t>0~~ and ~~u(0) = u_0
\end{align*}
 with the Crank-Nicolson, we have
\begin{align*}
 u^{n+1} - u^{n} = \Delta t f \left(\frac{u^{n+1} + u^{n}}{2}, t^{n+\frac{1}{2}} \right)
\end{align*}


\begin{lemma} (Stability estimates for the conservative ALE-SUPG form with Crank-Nicolson method)
Let the discrete version of~\eqref{assump1} and the assumption~\eqref{assump2} on $\delta_K$ hold true.  Further, assume that $\delta_K \leq \frac{\Delta t}{4}$ then
\begin{equation*}
\begin{aligned}
  \|u_h^{N+1}\|^2_{L^2(\Omega_{N+1})}  +& \frac{\Delta t}{4} \sum_{n=0}^{N}|||u_h^{n+1} + u_h^n|||^{2}_{L^2 \left(\Omega_{t^{n+1/2}} \right)} \\
&\leq  \left((1+\Delta t \beta_2^0)\| u_h^{0}\|^2_{L^2(\Omega_{0})}  + \Delta t\sum_{n=0}^{N}\left(\frac{2}{\mu}+  \Delta t \right) \|f^{n+1/2}\|^2_{L^{2}(\Omega_{t_{n+1/2}})}\right)
 \exp{\left( \Delta   t \sum_{n=1}^{N+1}\frac{\beta_1^n + \beta_2^n}{1-\Delta t(\beta_1^n + \beta_2^n)} \right)}. \\
 \end{aligned}
\end{equation*}
\end{lemma}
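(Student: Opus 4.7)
The strategy is to mirror the Implicit Euler argument but adapted to the Crank--Nicolson midpoint rule, testing the fully discrete form with $\psi_h = \tfrac{1}{2}(u_h^{n+1}+u_h^n)$ and again exploiting a GCL-type identity to control the domain motion, with the crucial difference that the step-to-step bound will no longer close without a Gronwall argument. First I would write down the modified Crank--Nicolson version of the conservative ALE--SUPG system at time level $t^{n+1/2}$, with convective, reactive and stabilization coefficients evaluated at $t^{n+1/2}$, and with $f$ replaced by $f^{n+1/2}$. The natural test function is $\psi_h = u_h^{n+1}+u_h^n$, which turns the discrete time derivative into $\|u_h^{n+1}\|^2_{\Omega_{t^{n+1}}} - \|u_h^n\|^2_{\Omega_{t^n}}$ up to a domain-motion remainder, and produces the mesh-dependent midpoint norm $|||u_h^{n+1}+u_h^n|||^2$ on the left after invoking Lemma~\ref{lemma1}.

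Next I would handle the two geometric corrections. The identity
\[
\|u_h^{n+1}\|^2_{L^2(\Omega_{t^{n+1}})} - \|u_h^{n+1}\|^2_{L^2(\Omega_{t^n})} = \Delta t \int_{\Omega_{h,t^{n+1/2}}} |u_h^{n+1}|^2\, \nabla\cdot\bw_h\, dx,
\]
already used in Lemma~\ref{stabcons}, rewrites the mass term on the reference configuration, and an analogous identity is needed for the cross term coming from $(u_h^n, u_h^{n+1}+u_h^n)$. Together with the $\nabla\cdot(\bw_h u_h)$ term already present in the conservative form, these contribute quantities that I would bound by $\beta_1^n \|u_h^n\|_0^2 + \beta_2^n \|u_h^{n+1}\|_0^2$ for suitable constants depending on $\|\nabla\cdot\bw_h\|_{\infty,t^{n+1/2}}$. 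The right-hand side forcing terms are then controlled by Cauchy--Schwarz and Young's inequality exactly as in the Implicit Euler case: the $(f^{n+1/2}, u_h^{n+1}+u_h^n)$ contribution gives $\frac{2\Delta t}{\mu}\|f^{n+1/2}\|_0^2 + \tfrac{\mu}{2}\|u_h^{n+1}+u_h^n\|_0^2/2$, and the SUPG forcing gives a $\Delta t\,\delta_K\|f^{n+1/2}\|_0^2$ contribution together with a stabilization norm term that, under the hypothesis $\delta_K\le \Delta t/4$, is absorbed into $\tfrac{\Delta t}{4}|||u_h^{n+1}+u_h^n|||^2$.

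After absorbing the stabilization-norm and $\mu$-weighted $L^2$ pieces into the coercive lower bound (leaving $\tfrac{\Delta t}{4}|||u_h^{n+1}+u_h^n|||^2$ on the left), I would arrive at the one-step inequality
\[
\|u_h^{n+1}\|^2_{L^2(\Omega_{t^{n+1}})} + \tfrac{\Delta t}{4}|||u_h^{n+1}+u_h^n|||^2 \le (1+\Delta t\,\beta_2^n)\|u_h^{n+1}\|_0^2 \text{ already moved}
\]
so that, after rearranging and telescoping by summing from $n=0$ to $N$, the left-hand side becomes $\|u_h^{N+1}\|^2 + \tfrac{\Delta t}{4}\sum |||u_h^{n+1}+u_h^n|||^2$ and the right-hand side contains $\|u_h^0\|^2(1+\Delta t\,\beta_2^0)$, the forcing sum $\Delta t\sum\bigl(\tfrac{2}{\mu}+\Delta t\bigr)\|f^{n+1/2}\|^2$, and the residual Gronwall term $\Delta t\sum_{n=1}^{N+1}(\beta_1^n+\beta_2^n)\|u_h^n\|_0^2$. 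Applying the discrete Gronwall Lemma~2 with $\gamma_i = \beta_1^i+\beta_2^i$ (valid whenever $\Delta t(\beta_1^i+\beta_2^i)<1$, which is the implicit smallness-of-$\Delta t$ condition behind the conditional stability) yields the exponential prefactor in the claim.

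The main obstacle I anticipate is \emph{not} the absorption of the SUPG cross terms, which goes through as in Lemma~\ref{lemma1}, but rather keeping careful track of the two distinct geometric contributions: the $\nabla\cdot\bw_h$ factor arising from changing domains between $t^n$ and $t^{n+1}$, and the $-\nabla\cdot(\bw_h u_h)$ term inherent in the conservative formulation evaluated at the midpoint. The former is the reason an Implicit-Euler-style unconditional estimate is lost and a Gronwall argument becomes necessary; the latter must be split as $\bw_h\cdot\nabla u_h + u_h\nabla\cdot\bw_h$ and combined with the SUPG-side convective contribution, with only the $\nabla\cdot\bw_h$ remainder left to be absorbed into the $\beta_i^n$ coefficients. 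Getting the indexing right in the summation so that the telescoped boundary term produces exactly the $(1+\Delta t\,\beta_2^0)\|u_h^0\|^2$ prefactor (and not an extra term at $n=N+1$) is the one careful bookkeeping step.
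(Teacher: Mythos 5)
Your proposal follows essentially the same route as the paper's proof: test with $u_h^{n+1}+u_h^n$, convert the discrete time derivative into $\|u_h^{n+1}\|^2_{L^2(\Omega_{t^{n+1}})}-\|u_h^n\|^2_{L^2(\Omega_{t^n})}$ plus a $\nabla\cdot\bw_h$ cross term via the domain-change identity, bound the geometric remainders by $\beta_1^n,\beta_2^n$ (which in the paper carry the ALE Jacobian norms alongside $\|\nabla\cdot\bw_h\|_{L^\infty}$), absorb the SUPG forcing under $\delta_K\le\Delta t/4$, telescope, and close with the discrete Gronwall lemma under $\Delta t(\beta_1^n+\beta_2^n)<1$. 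This matches the paper's argument in both structure and in the source of the conditional stability.
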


\begin{proof}
Applying the time discretization to the conservative SUPG-ALE equation, we get
\begin{align*}
\int_{\Omega_{h,t^{n+1}}} u^{n+1}_h  v_h~ d x - &\int_{\Omega_{h,t^{n}}} u^{n}_h  v_h ~d x 
 + \Delta t ~a^{n+1/2}_{SUPG}\left(\frac{u^{n+1}_h + u^{n}_h}{2},v_h \right) - \Delta t \int_{\Omega_{h,t^{n+1/2}}}\nabla \cdot \left(\bw^{n+1/2}_h   \left(\frac{u^{n+1}_h + u^{n}_h}{2} \right)\right)~ v_h~  dx     \\ 
&\quad  = \Delta t\int_{\Omega_{h,t^{n+1/2}}} f^{n+1/2} v_h~   dx + \sum_{K  \in \mathcal{T}_{h,t^{n+1/2}}} \delta_{K}  \int_{K}\Delta t f^{n+1/2} ~(\mathbf{b-w}_h) \cdot\nabla v_h ~ dK,
\end{align*}
 Testing the above equation with $v_h = u_h^{n+1} + u_h^n$, and using the relations
\[
(u_h, u_h + v_h) = \frac{1}{2}||u_h||^2 + \frac{1}{2}||u_h + v_h||^2 - \frac{1}{2}||v_h||^2  
\]
and
\begin{align*}
 || u_h^{n}||^2_{L^2 \left( \Omega_{t^{n+1}} \right)} = || u_h^{n}||^2_{L^2( \Omega_{t^{n}})} + \int_{t^{n}}^{t^{n+1}}\int_{\Omega_{t}} \nabla \cdot \mathbf{w}_{h}|u_h^{n}|^2~dx~ dt,
\end{align*}
 the first term can be written as,
 \begin{equation*}
\begin{aligned}
 \int_{\Omega_{h,t^{n+1}}}u_h^{n+1}(u_h^{n+1} + u_h^n)~dx~ &  - \int_{\Omega_{h,t^{n}}}u_h^{n}(u_h^{n+1} + u_h^n)~dx \\
&= \frac{1}{2} ||u_h^{n+1}||^2_{L_2(\Omega_{t^{n+1}})} + \frac{1}{2}||u_h^{n+1} + u_h^{n}||^2_{L_2(\Omega_{t^{n+1}})} - \frac{1}{2}||u_h^n||^2_{L_2(\Omega_{t^{n+1}})} - \frac{1}{2}||u_h^n||^2_{L_2(\Omega_{t^{n}})} \\&- \frac{1}{2}||u_h^{n+1} + u_h^{n}||^2_{L_2(\Omega_{t^{n}})} +  \frac{1}{2}||u_h^{n+1}||^2_{L_2(\Omega_{t^{n}})}\\
& = ||u_h^{n+1}||^2_{L_2(\Omega_{t^{n+1}})} - ||u_h^n||^2_{L_2(\Omega_{t^{n}})} + \Delta t\int_{\Omega_{t^{n+1/2}}} \nabla \cdot \mathbf{w}_{h}u_h^{n+1} u_h^{n}~dx
\end{aligned}
\end{equation*}
Applying integration by parts to the mesh velocity term, along with the coercivity of the bilinear form and using the Cauchy-Schwarz inequality to the right hand side terms, we get
 \begin{equation*}
\begin{aligned}
||u_h^{n+1}||^2_{L_2(\Omega_{t^{n+1}})} + \frac{\Delta t}{8} |||(u_h^{n+1} &+ u_h^n)|||^2_{{L_2(\Omega_{t^{n+1/2}})}}  
+  \Delta t\int_{\Omega_{h,t^{n+1/2}}} \nabla \cdot \mathbf{w}_{h}u_h^{n+1}u_h^{n}~dx \\ &\leq||u_h^n||^2_{L_2(\Omega_{t^{n}})} + \frac{\Delta t}{4}\int_{\Omega_{h, t^{n+1/2}}}\nabla \cdot \mathbf{w}_{h} |u_h^{n+1}+u_h^{n}|^2~dx \\
&    +\frac{\Delta t}{\mu} ||f^{n+1/2}||^2_{{L_2(\Omega_{t^{n+1/2}})}} +\Delta t \sum_{K \in \mathcal{T}_{h,t^{n+1/2}}} \delta_{K} ||f^{n+1/2}||^2_{K}\\
&\leq  \Delta t\int_{\Omega_{h,t^{n+1/2}}} \nabla \cdot \mathbf{w}_{h}\left( \frac{1}{4}|u_h^{n+1}+u_h^{n}|^2 - u_h^{n+1}u_h^{n}\right)~dx \\
&  +  ||u_h^n||^2_{L_2(\Omega_{t^{n}})} +\frac{\Delta t}{\mu} ||f^{n+1/2}||^2_{{L_2(\Omega_{t^{n+1/2}})}} +\Delta t \sum_{K \in \mathcal{T}_{h,t^{n+1/2}}} \delta_{K} ||f^{n+1/2}||^2_{K}\\
&\leq  \frac{\Delta t}{2}\int_{\Omega_{h,t^{n+1/2}}} \nabla \cdot \mathbf{w}_{h}\left(|u_h^{n}|^2 + |u_h^{n+1}|^2 \right)~dx \\
&  +  ||u_h^n||^2_{L_2(\Omega_{t^{n}})} +\frac{\Delta t}{\mu} ||f^{n+1/2}||^2_{{L_2(\Omega_{t^{n+1/2}})}} +\Delta t \sum_{K \in \mathcal{T}_{h,t^{n+1/2}}} \delta_{K} ||f^{n+1/2}||^2_{K}\\
\end{aligned}
\end{equation*}
Using the ALE map and its Jacobian, we obtain
\begin{equation*}
\begin{aligned}
||u_h^{n+1}||^2_{L_2(\Omega_{t^{n+1}})} +& \frac{\Delta t}{8} |||u_h^{n+1} + u_h^n|||^2_{{L_2(\Omega_{t^{n+1/2}})}}  \\
 &\leq   \frac{\Delta t}{2}~ ||\nabla \cdot \mathbf{w}_{h}||_{L_{\infty}(\Omega_{t_{n+1/2}})}~ ||J_{\mathcal A_{{t_{n+1}},~t_{n+1/2}}}||_{L_{\infty}(\Omega_{t_{n+1}})}~||u_h^{n+1}||^2_{L_2(\Omega_{t^{n+1}})} +\frac{\Delta t}{\mu} ||f^{n+1/2}||^2_{{L_2(\Omega_{t^{n+1/2}})}}\\
 & +\Delta t \sum_{K \in \mathcal{T}_{h,t^{n+1/2}}} \delta_{K} ||f^{n+1/2}||^2_{K}+ \frac{\Delta t}{2}~ ||\nabla \cdot \mathbf{w}_{h}||_{L_{\infty}(\Omega_{t_{n+1/2}})}~ ||J_{\mathcal A_{{t_{n}},~t_{n+1/2}}}||_{L_{\infty}(\Omega_{t_{n}})}~ ||u_h^n||^2_{L_2(\Omega_{t^{n}})}
\end{aligned}
\end{equation*}
Denoting
\[
 \beta_1^n   =  \frac{1}{2}~||\nabla \cdot \mathbf{w}_{h}||_{L_{\infty}(\Omega_{t_{n+1/2}})}||J_{\mathcal A_{{t_{n+1}},~t_{n+1/2}}}||_{L_{\infty}(\Omega_{t_{n+1}})}, \qquad
\beta_2^n = \frac{1}{2}~||\nabla \cdot \mathbf{w}_{h}||_{L_{\infty}(\Omega_{t_{n+1/2}})} ||J_{\mathcal A_{{t_{n}},~t_{n+1/2}}}||_{L_{\infty}(\Omega_{t_{n}})},
\]
the inequality becomes
\begin{equation*}
\begin{aligned}
||u_h^{n+1}||^2_{L_2(\Omega_{t^{n+1}})} + \frac{\Delta t}{8} |||(u_h^{n+1} + u_h^n)|||^2_{{L_2(\Omega_{t^{n+1/2}})}}  
&\leq  \Delta t \beta_1^{n+1}||u_h^{n+1}||^2_{{L_2(\Omega_{t^{n+1}})}} + ( 1 + \Delta t \beta_2^n)||u_h^n||^2_{L_2(\Omega_{t^{n}})}\\
&  +\frac{\Delta t}{\mu} ||f^{n+1/2}||^2_{{L_2(\Omega_{t^{n+1/2}})}} +\Delta t \sum_{K \in \mathcal{T}_{h,t^{n+1/2}}} \delta_{K} ||f^{n+1/2}||^2_{K}
\end{aligned}
\end{equation*}
Summing over the index $n = 0, 1,2,...N$, and using the assumption on $\delta_k$ , we have
\begin{equation*}
\begin{aligned}
||u_h^{N+1}||^2_{L_2(\Omega_{t^{N+1}})} +& \frac{\Delta t}{8} \sum_{n=0}^{N} |||(u_h^{n+1} + u_h^n)|||^2_{{L_2(\Omega_{t^{n+1/2}})}}  \\
&\leq  \Delta t \beta_1^{N+1}||u_h^{N+1}||^2_{{L_2(\Omega_{t^{N+1}})}} + \Delta t \sum_{n=1}^{N} (\beta_1^n + \beta_2^n)||u_h^n||^2_{{L_2(\Omega_{t^{n}})}} + ( 1 + \Delta t \beta_2^0)||u_h^0||^2_{L_2(\Omega_{t^{0}})}\\
 &  +\frac{\Delta t}{\mu} ||f^{n+1/2}||^2_{{L_2(\Omega_{t^{n+1/2}})}} +\Delta t \sum_{K \in \mathcal{T}_{h,t^{n+1/2}}} \delta_{K} ||f^{n+1/2}||^2_{K}\\
 & \leq \Delta t \sum_{n=1}^{N+1} (\beta_1^n + \beta_2^n)||u_h^n||^2_{{L_2(\Omega_{t^{n}})}} + ( 1 + \Delta t \beta_2^0)||u_h^0||^2_{L_2(\Omega_{t^{0}})} +\Delta t\sum_{n=1}^{N+1}\left(\frac{2}{\mu} + \frac{\Delta t}{2} \right) \|f^{n+1/2}\|^2_{L^{2}(\Omega_{t_{n+1/2}})}
\end{aligned}
\end{equation*}
Finally, using the Grownwall's lemma, we get
\begin{equation*}
\begin{aligned}
  \|u_h^{N+1}\|&^2_{L^2(\Omega_{N+1})}  + \frac{\Delta t}{4} \sum_{n=0}^{N}|||(u_h^{n+1} + u_h^n)|||^{2}_{L^2 \left(\Omega_{t^{n+1/2}} \right)} \\
&\leq  \left((1+\Delta t \beta_2^0)\| u_h^{0}\|^2_{L^2(\Omega_{0})}  + \Delta t\sum_{n=0}^{N}\left(\frac{2}{\mu}+  \Delta t \right) \|f^{n+1/2}\|^2_{L^{2}(\Omega_{t_{n+1/2}})}\right)
 \exp{\left( \Delta   t \sum_{n=1}^{N+1}\frac{\beta_1^n + \beta_2^n}{1-\Delta t(\beta_1^n + \beta_2^n)} \right)}. \\
 \end{aligned}
\end{equation*}
  with a restriction on $\Delta t$ as,
\[
 \Delta t < \frac{1}{\beta^n_1 +\beta_2^n} = \left(||\nabla \cdot \mathbf{w}_{h}||_{L_{\infty}(\Omega_{t_{n+1/2}})}||J_{\mathcal A_{{t_{n+1/2}},~t_{n+1}}}||_{L_{\infty}(\Omega_{t_{n+1}})} + ||\nabla \cdot \mathbf{w}_{h}||_{L_{\infty}(\Omega_{t_{n+1/2}})} ||J_{\mathcal A_{{t_{n}},~t_{n+1/2}}}||_{L_{\infty}(\Omega_{t_{n}})} \right)^{-1}.
\]
\end{proof}

\section{Numerical results}
Numerical results for the proposed conservative ALE-SUPG finite element are presented in this section. Two examples, $(i)$ transient scalar equation   with  $\epsilon=0.01$, $\bb=0$ and $c=0$ in \eqref{model}, and $(ii)$ transient scalar equation $\epsilon=10^{-8}$, $\bb=(1,0)^T$ and $c=0$ in \eqref{model}, are considered. The standard Galerkin solution and the SUPG solution  are compared. In computations, the SUPG parameter is chosen as
\begin{align*}
 \delta_K &= \ds\left\{ \begin{array}{ccl}
       \ds\frac{\delta_0{h_{K,t}}}{ {\|\bb -\bw\|_{L^{\infty}}} } & \;\text{  if}& \epsilon < h_{K,t}\|\bb -\bw\|_{L^{\infty}},\\
           0 &\;\text{  else,}&
        \end{array}\right. 
\end{align*}
where $\delta_0$ a numerical parameter and $h_{K,t}$ is the time-dependent local cell size. Computations are performed for different values of $\delta_0$. Further, the overshoots and undershoots are plotted.  All computations are performed using an unstructured triangular mesh. Further, the piecewise linear and piecewise quadratic finite elements are used for the spatial discretization in the first and second examples, respectively.  Even though the second derivative in the SUPG formulation becomes zero for the linear finite elements, the influence will be negligible for a very small diffusive coefficient $\epsilon$. Note that the SUPG method is needed only for problems with small diffusion coefficient.

\subsection{Example 1}
We consider the time-dependent equation  \eqref{model} with  $\epsilon=0.01$, $\bb=0$  and $c=0$. Further, the initial value is chosen as,  $u_0=1600~ Y_1(1-Y_1)~Y_2(1-Y_2)$ and  $\Omega_0:=(0,1)^2$  is the initial (reference) domain. Moreover, 
 the deformation of the time-dependent domain, $\Omega_t$ is defined by  
\[ x(Y, t) =  \mathcal{A}_t(Y) : \left\{
  \begin{array}{l l}
    x_1 = Y_1(2 - cos(20 \pi t)) & \\
    x_2 = Y_2(2 - cos(20 \pi t))
  \end{array} \right.    
\]
 where $Y\in\Omega_0$. Then, the mesh velocity $\mathbf{w}$ becomes
\[ 
  \mathbf{w} = \frac{dY}{dt} = \left( \frac{20 \pi x_1 sin(20 \pi t)}{2-cos(20 \pi t)}, \frac{20 \pi x_2 sin(20 \pi t)}{2-cos(20 \pi t)}  \right).
\]
%
In computations, we use the piecewise linear in time interpolation for the domain movement, i.e., for every $\tau\in[t^n,t^{n+1}]$
define $x_{h}(Y,t) $ by
\[
 x_{h}(Y,\tau) = \frac{\tau-t^n}{\Delta t}x_{h}^{n+1}(Y) +  \frac{t^{n+1}-\tau }{\Delta t}x_{h}^{n}(Y).
\]
Hence, the mesh velocity is obtained as
\[
 \bw_{h}(Y,\tau) =  \frac{x_{h}^{n+1}(Y) - x_{h}^{n}(Y)}{\Delta t}.
\]
\begin{figure}[ht!]
\begin{center}
\unitlength1cm
\begin{picture}(11.5, 5.)
\put(-1.,-0.5){\makebox(6,6){\includegraphics[scale=0.25]{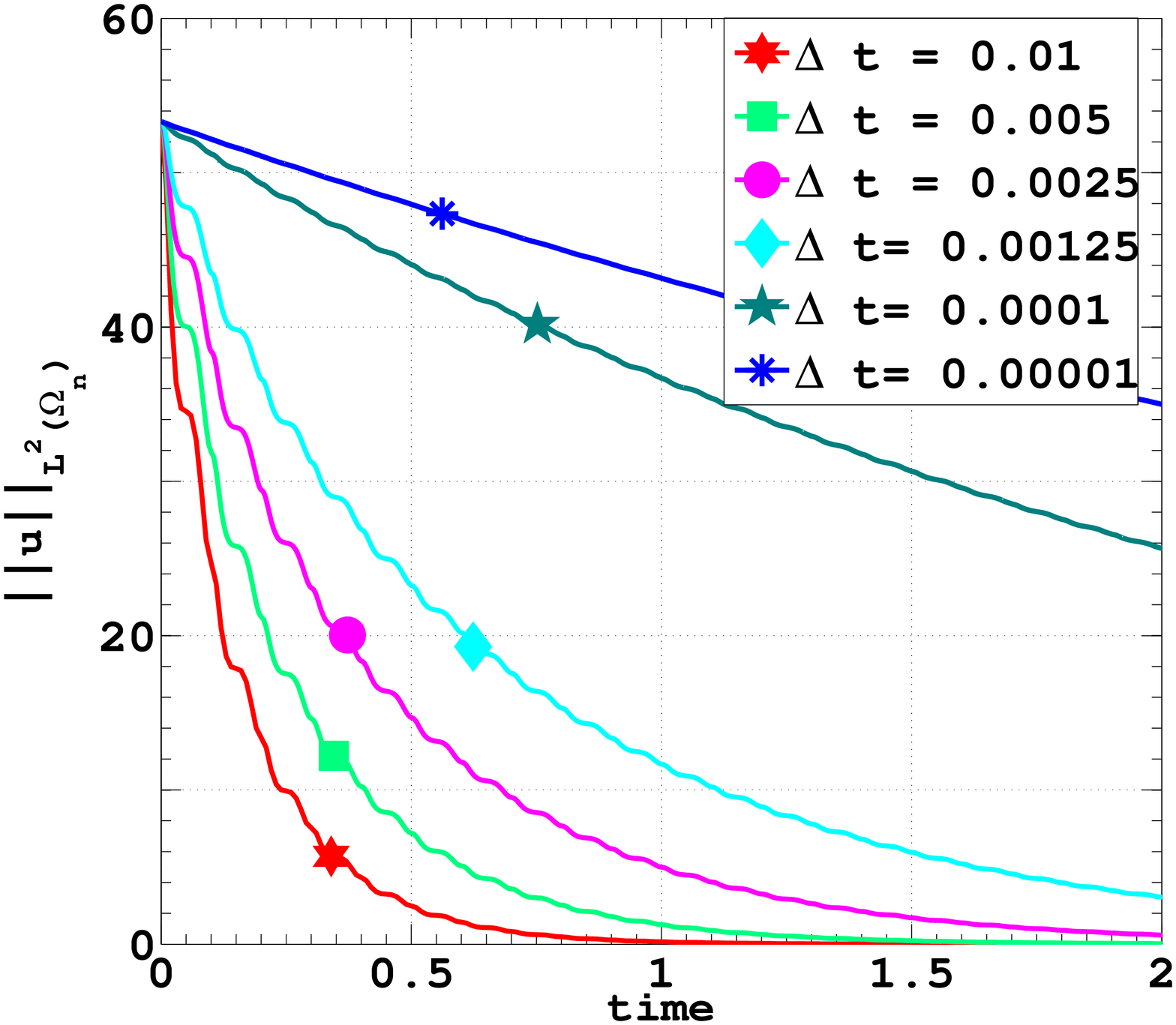}}}
\put(6,-0.5){\makebox(6,6){\includegraphics[scale=0.25]{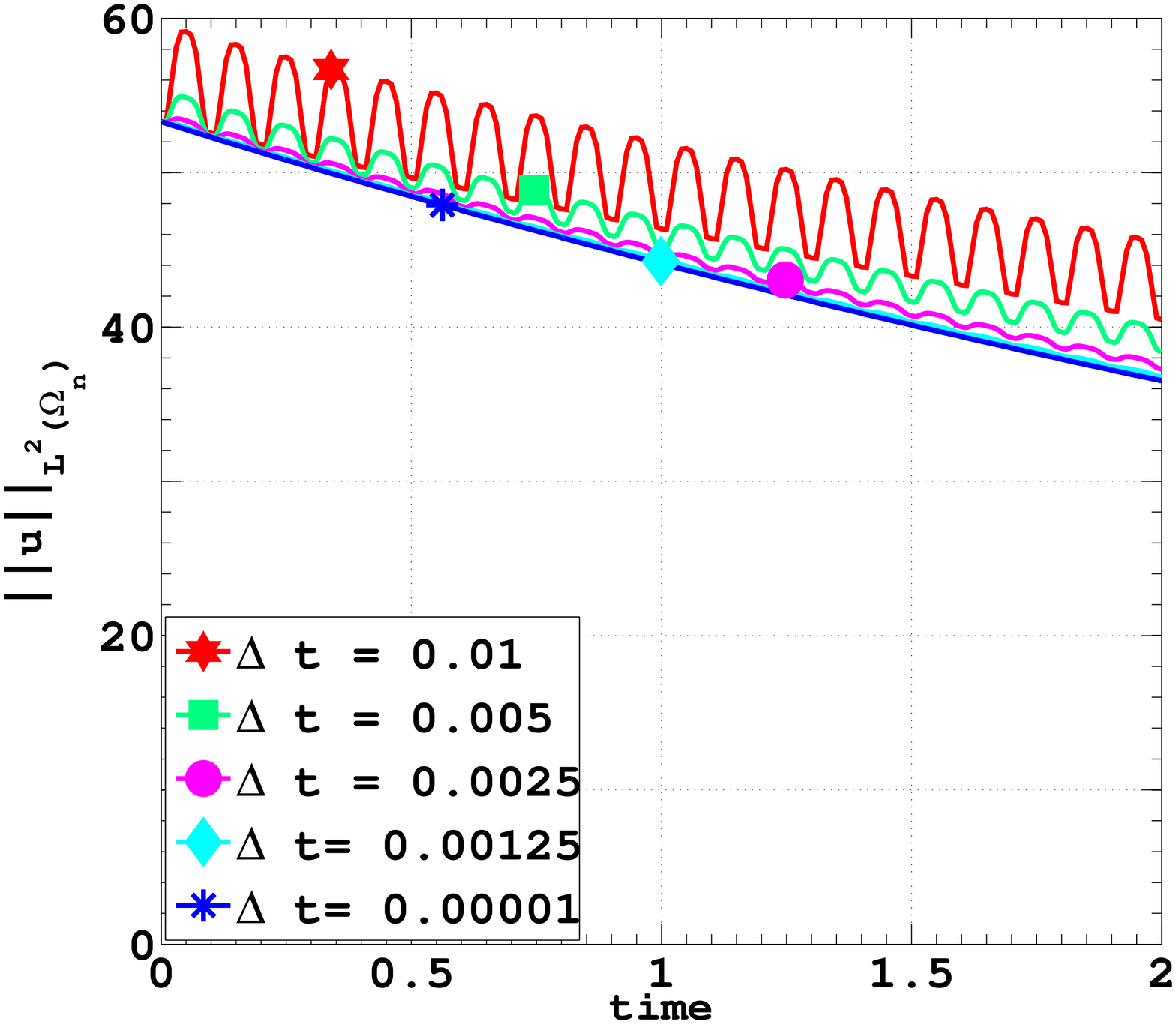}}}
\put(2,5.5){(a)}
\put(9,5.5){(b)}
\end{picture}
\end{center}
\caption{$L^2$ norm of the solution obtained with the standard Galerkin solution for different time-steps.  Implicit Euler (a), and Crank-Nicolson (b). \label{Ex1Gal}}
\end{figure}
\begin{figure}[ht!]
\begin{center}
\unitlength1cm
\begin{picture}(11.5,6.)
\put(-1.,-0.5){\makebox(6,6){\includegraphics[scale=0.25]{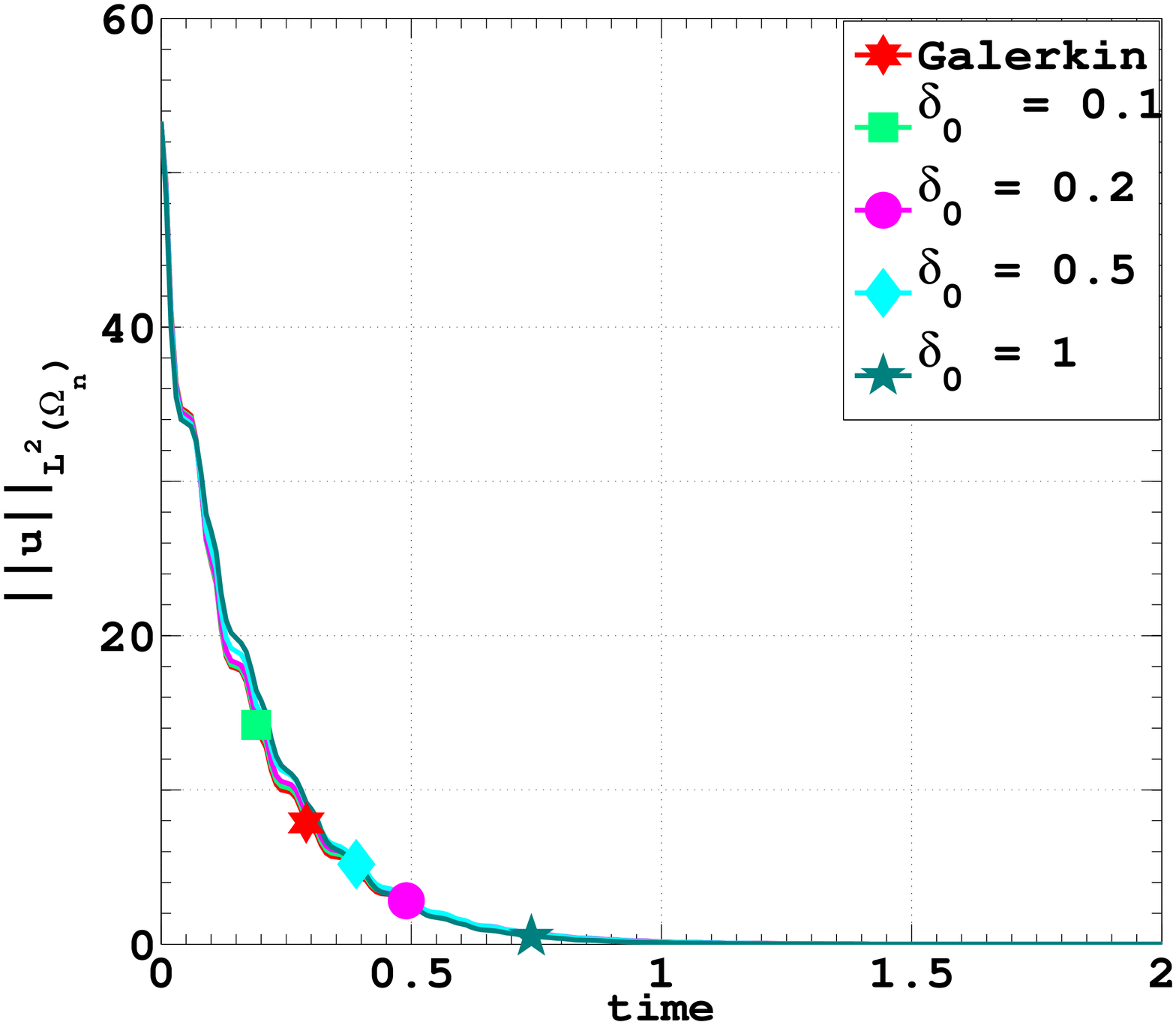}}}
\put(6,-0.5){\makebox(6,6){\includegraphics[scale=0.25]{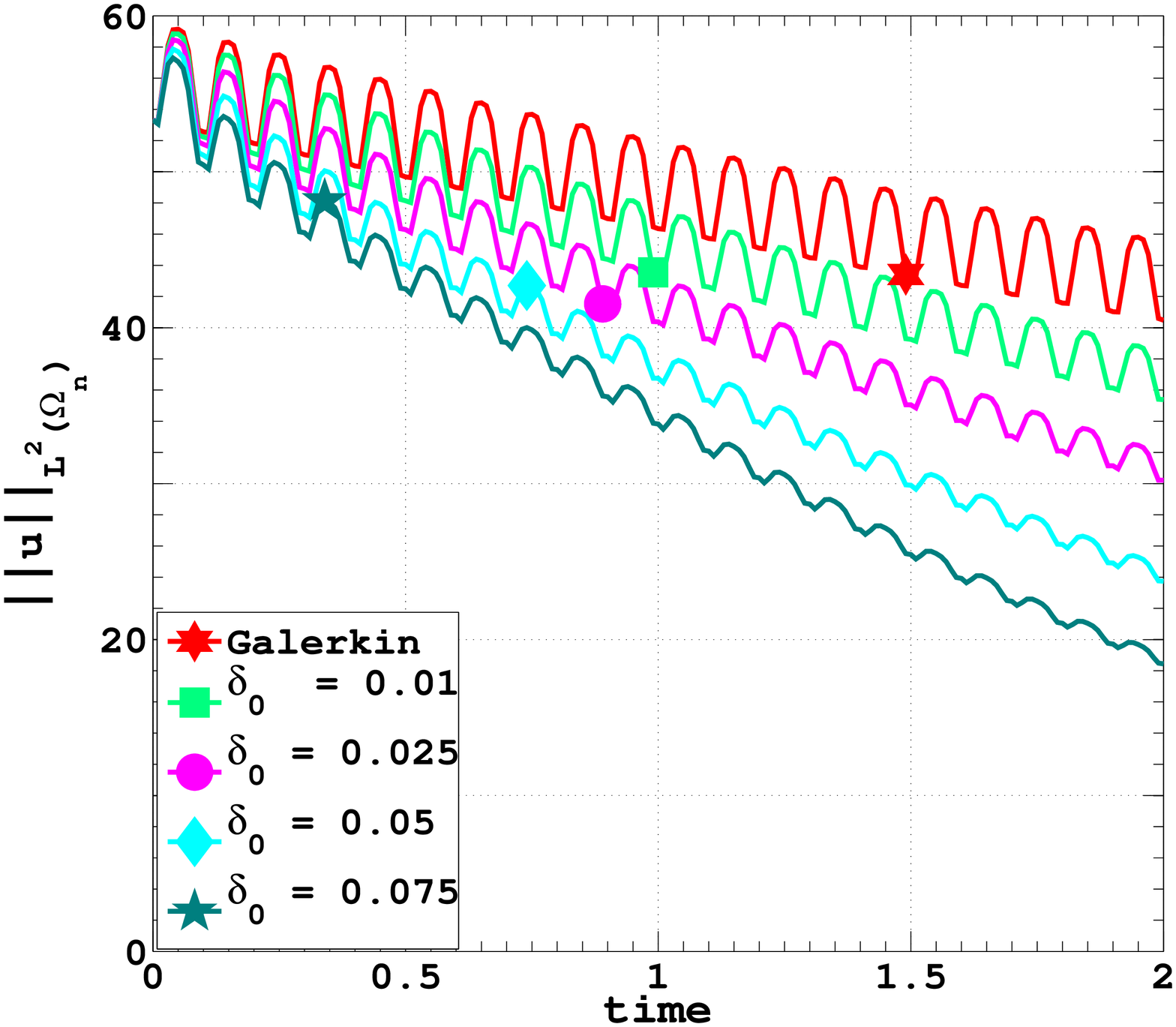}}}
\put(2,5.5){(a)}
\put(9,5.5){(b)}
\end{picture}
\end{center}
\caption{$L^2$-norm of the solution obtained with the SUPG discretization for different $\delta_0$. Implicit Euler (a),  and Crank-Nicolson (b). \label{Ex1SUPG}}
\end{figure}
The computational mesh consists 8192 triangles and 4225 degrees of freedom (DOF). Even though the convective term is zero in the considered example, a convection type term will be introduced by the mesh velocity due to the conservative ALE formulation.

The  $L^2-$norm of the  solution obtained with  the standard Galerkin for different time-steps are presented in Figure~\ref{Ex1Gal}. The numerical solution obtained with the Euler method is more diffusive, and it decreases monotonically as it can be clearly seen from the stability estimates lemma~\eqref{stabcons}. However, the diffusive effect is not observed when a smaller time-step is used, see Figure~\ref{Ex1Gal} (a). Though the solution obtained with a large time-step is oscillatory in the case of Crank-Nicolson time discretization, the solution is not as diffusive as in the Euler's method. Nevertheless, the influence of time-steps on the solution is less when the Crank-Nicolson method is used. 

Next, $L^2-$norm of the solution obtained with the SUPG discretization  for different $\delta_0$ are presented in  Figure~\ref{Ex1SUPG}. Since $\bb=0$,  the only term in convection is the mesh velocity. Therefore, the SUPG parameter is calculated using
\begin{align*}
 \delta_K &= \ds\left\{ \begin{array}{ccl}
       \ds\frac{\delta_0{h_{K,t}}}{ {\|\bw\|_{L^{\infty}}} } & \;\text{  if}& \epsilon < h_{K,t}\|\bw\|_{L^{\infty}},\\
           0 &\;\text{  else.}&
        \end{array}\right. 
\end{align*}
Further, the time-step $\Delta t=0.01$ is used. Since the solution obtained with the Euler method is already too diffusive, the smearing effect in the SUPG solution is not visible explicitly. However, the effects of $\delta_0$ can be seen clearly in the solution obtained with the Crank-Nicolson method, see Figure~\ref{Ex1SUPG}~(b). Further, the amplitude of the oscillation in the $L^2$-norm of the solution reduced when $\delta_0$ increased.



\subsection{Example 2}
We next  consider an example that exemplifies a fluid-structure interaction problem. 
Let 
\[
 \Omega_0^S := \left\{(Y_1,Y_2)\in\mathbb{R}^2;~Y_1^2+Y_2^2\le1 \right\}\quad \text{ and } \quad 
 \Omega_t^S := \left\{(x_1,x_2) \right\}\subset\mathbb{R}^2,
\]
be the reference and the time-dependent circular disc, respectively. Here, the time-dependent coordinates $(x_1,x_2)$ are defined by 
\[ x(Y, t) =  \mathcal{A}_t(Y) : \left\{
  \begin{array}{l l}
    x_1 = Y_1  & \\
    x_2 = Y_2 +0.5\sin(2\pi t/5).
  \end{array} \right.    
\]
We then define a time-dependent two-dimensional channel 
\[
 \Omega_t:=\{(-3,9)\times(-3,3)\}\setminus \bar{\Omega}_t^S
\]
that excludes a periodically oscillating (up and down) circular disc $\Omega_t^S$.
Further,  we define  $\Gamma_N:=
\{9\}\times(-3,3)$ as the   out flow boundary and $\Gamma_D:= \partial\Omega_t \setminus \Gamma_N$ as the Dirichlet boundary.
\begin{figure}[ht!]
\begin{center}
\unitlength1cm
\begin{picture}(4,5.5)
\put(0.0,0.25){\makebox(4,4){\includegraphics[scale=0.4]{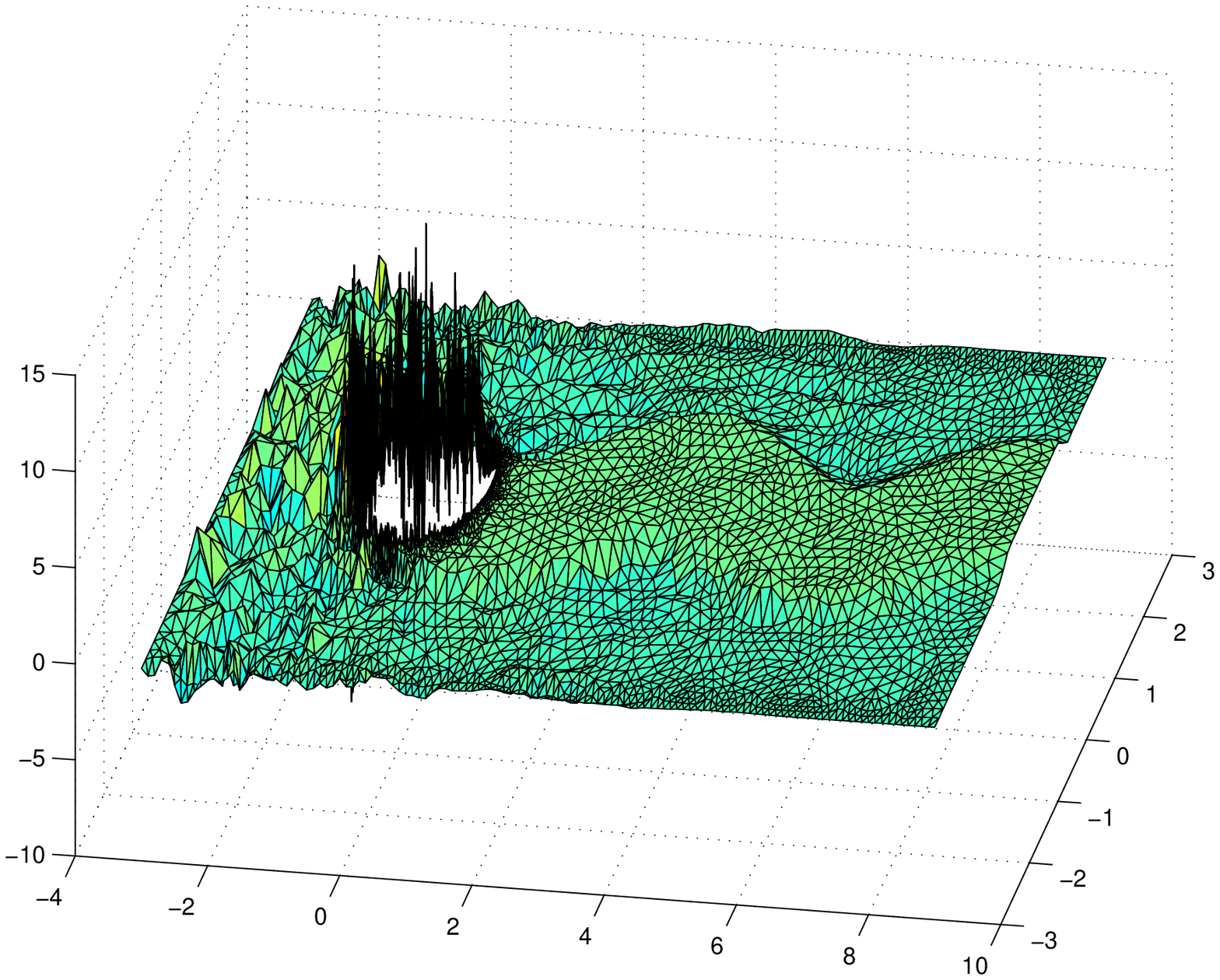}}}
\put(1.5,2.3){\makebox(4,4){\includegraphics[scale=0.2]{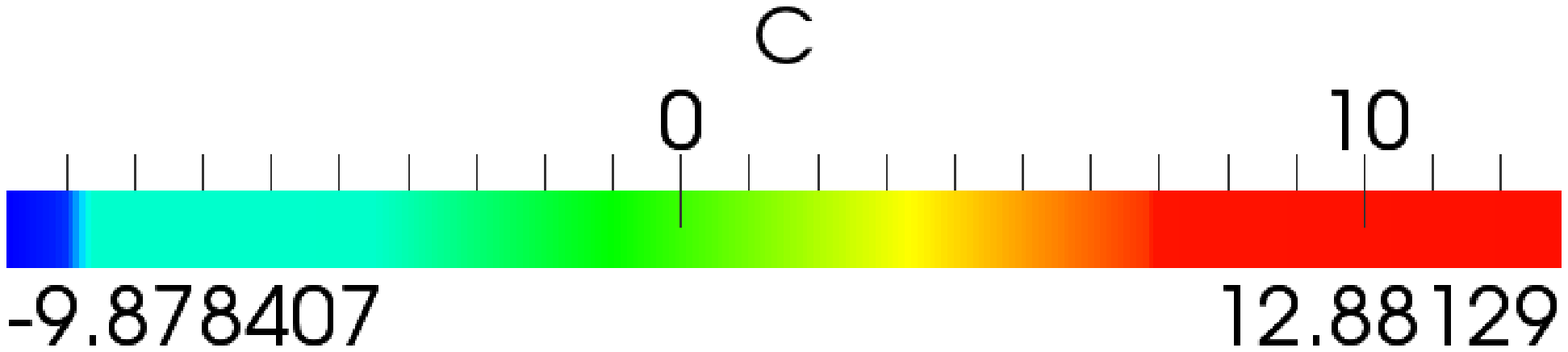}}}
\end{picture}
\end{center}
\caption{Standard Galerkin solution for the Example~2 at $t=10$. The overshoots and undershoots are above $100 \%$. \label{Ex2Gal}}
\end{figure}
We now solve  the transient scalar equation \eqref{model} with $\epsilon=10^{-8}$, $\bb=(1,0)^T$ and $c=0$.
Further,
we impose the homogeneous Neumann condition on $\Gamma_N$, and      
\[
u_D(x_1,x_2) =
	\begin{cases}
	1 \qquad \text{on}~ \partial \Omega_t^S, \\
	0 \qquad   \text{on } \Gamma_D.
	\end{cases}
\]
Note that there will be a boundary layer on the inlet side of the oscillating circular disc, and two interior layers behind the disc. Since the solid disc oscillates periodically, the position of the boundary and the interior layers also change in time.

The  computations are performed until the dimensionless time $T=10$ with the time step $\Delta t=0.01$. Further, the linear elastic-solid update technique is used to handle the mesh movement that occurs due to the oscillations of the solid disc.  At each time step, we first compute the displacement of the disc. We then solve the linear elastic equation in $\Omega_{t^n}$ to compute the inner points' displacement by considering the displacement on $\partial \Omega_{t^{n+1}}^S$ as the Dirichlet value. This elastic update technique avoids the remeshing during the entire simulation. 
The considered triangulated domain for this example consists 9416 triangular cells and 19552 DOF.
As expected the solution obtained with the standard Galerkin discretization consists spurious oscillations and instabilities, see Figure~\ref{Ex2Gal}. 

 We next perform an array of computations with different values of  $\delta_0$. Since the solution for this example, $u\in[0,~1]$, the values of the numerical solution below $0$ and above $1$ are called undershoots and overshoots, respectively. The observed undershoots and the overshoots for different values of $\delta_0$  are plotted in Figure~\ref{Ex2Under} and~\ref{Ex2Over}, respectively. The oscillations in the overshoots obtained with the Crank-Nicolson time discretization using $\delta_0 = 10 $~and~ $50$ are more, and therefore, a curve fitting is used to plot the overshoots.
 \begin{figure}[ht!]
\begin{center}
\unitlength1cm
\begin{picture}(11.5,6.)
\put(-1.,-0.5){\makebox(6,6){\includegraphics[scale=0.25]{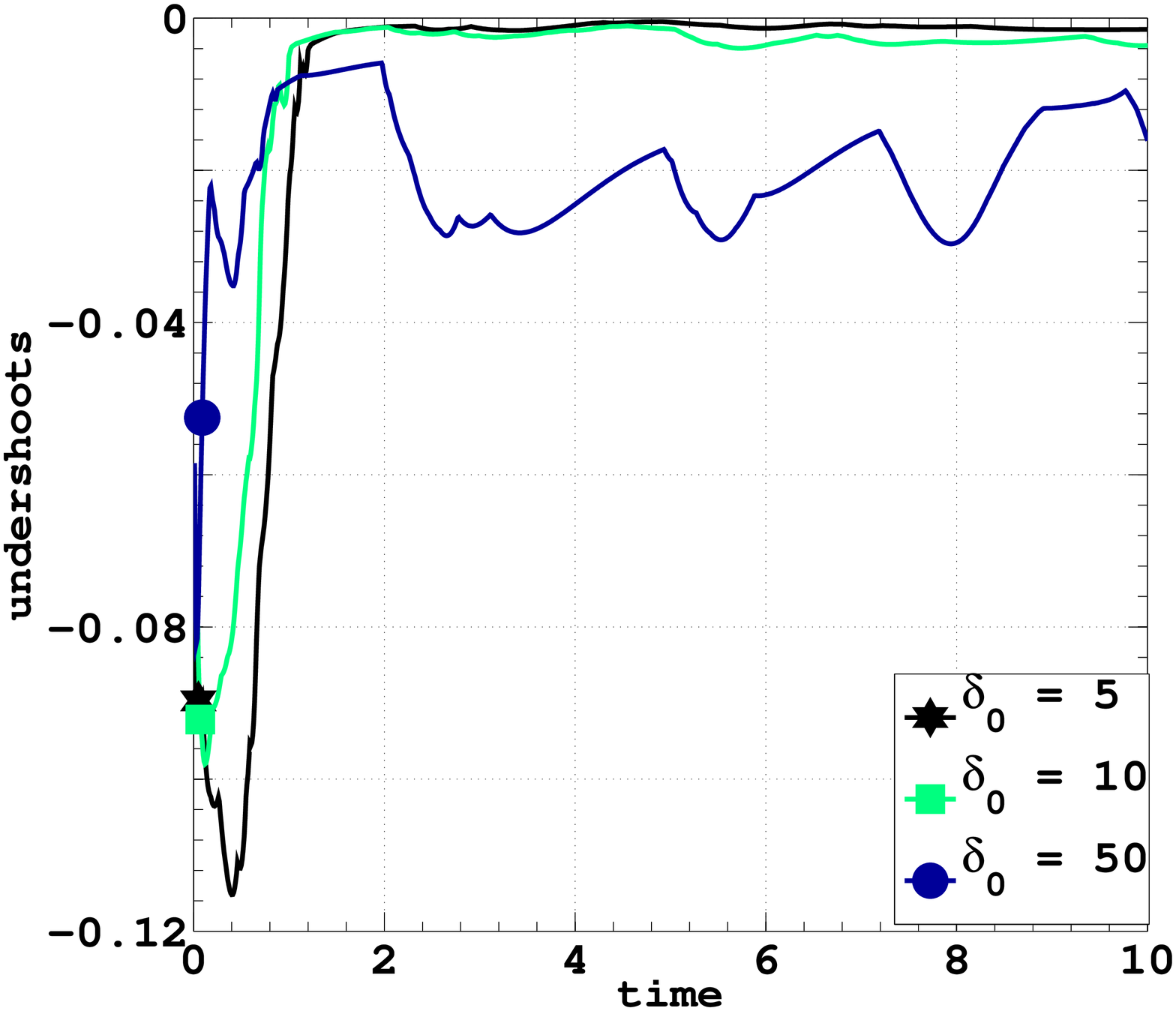}}}
\put(6,-0.5){\makebox(6,6){\includegraphics[scale=0.25]{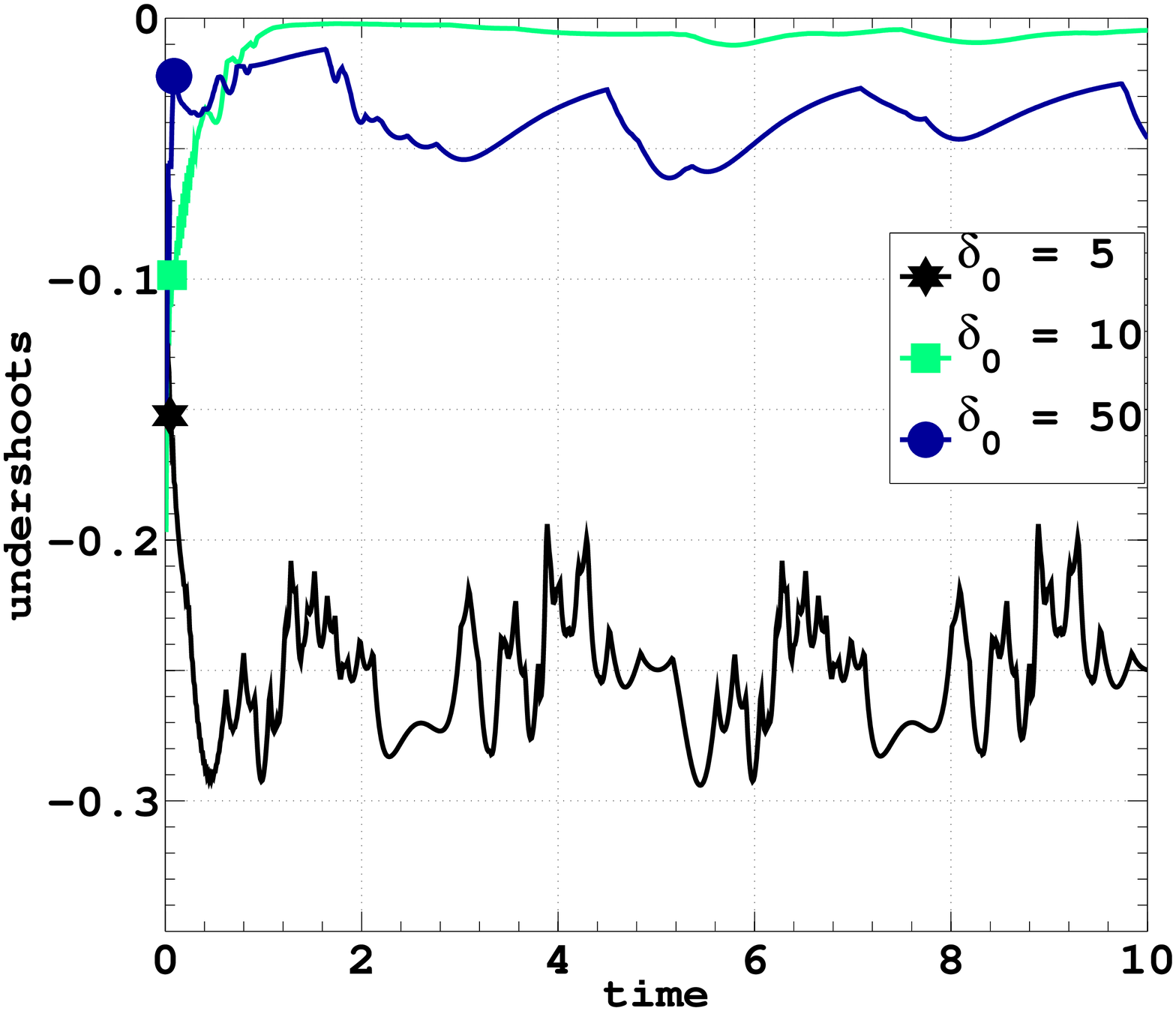}}}
\put(2,5.5){(a)}
\put(9,5.5){(b)}
\end{picture}
\end{center}
\caption{The observed undershoots in the SUPG solution of Example~2 for different values of $\delta_0$. Implicit Euler (a), and Crank-Nicolson (b). \label{Ex2Under}}
\end{figure}
\begin{figure}[ht!]
\begin{center}
\unitlength1cm
\begin{picture}(11.5,6.)
\put(-1.,-0.5){\makebox(6,6){\includegraphics[scale=0.25]{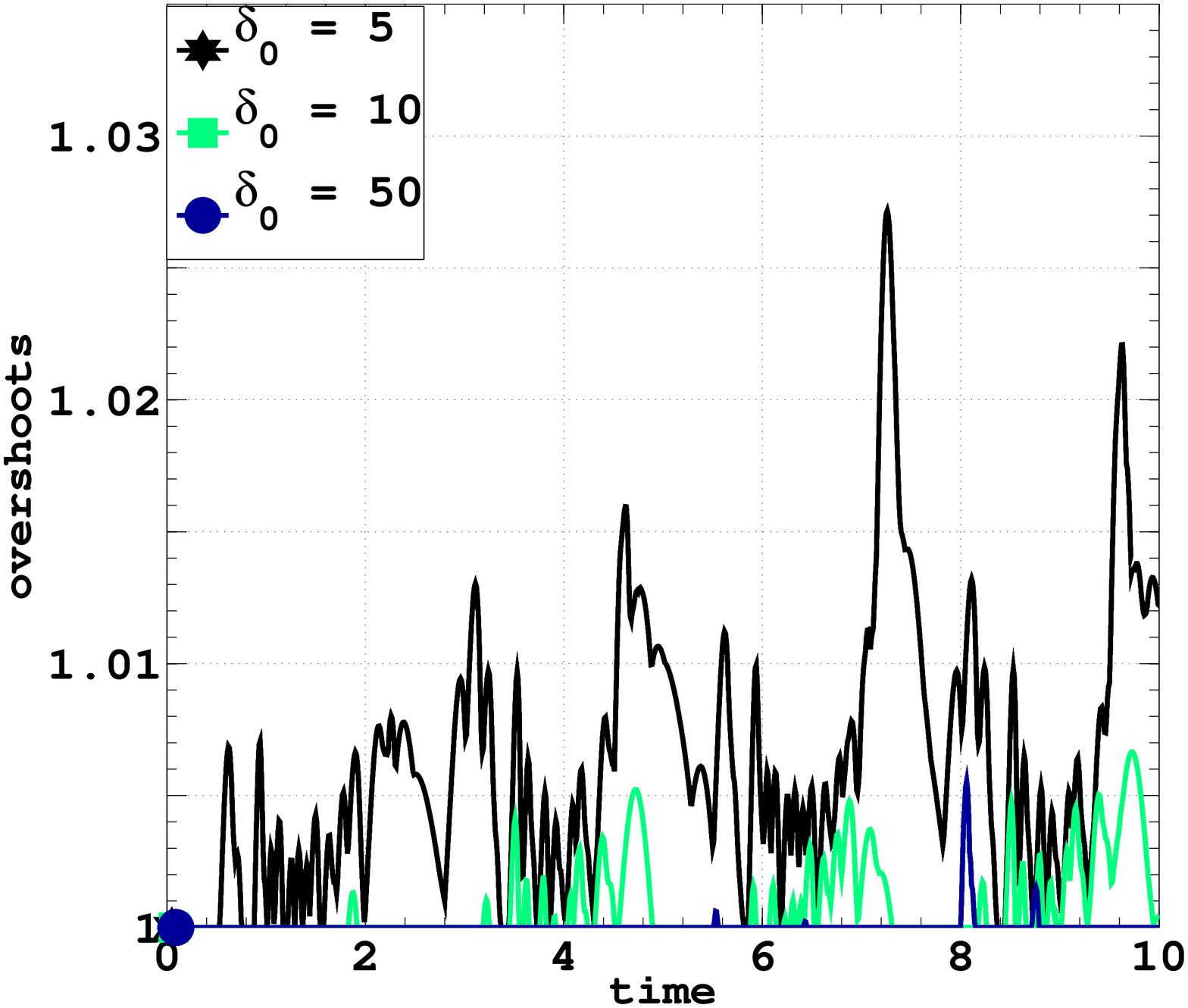}}}
\put(6,-0.5){\makebox(6,6){\includegraphics[scale=0.25]{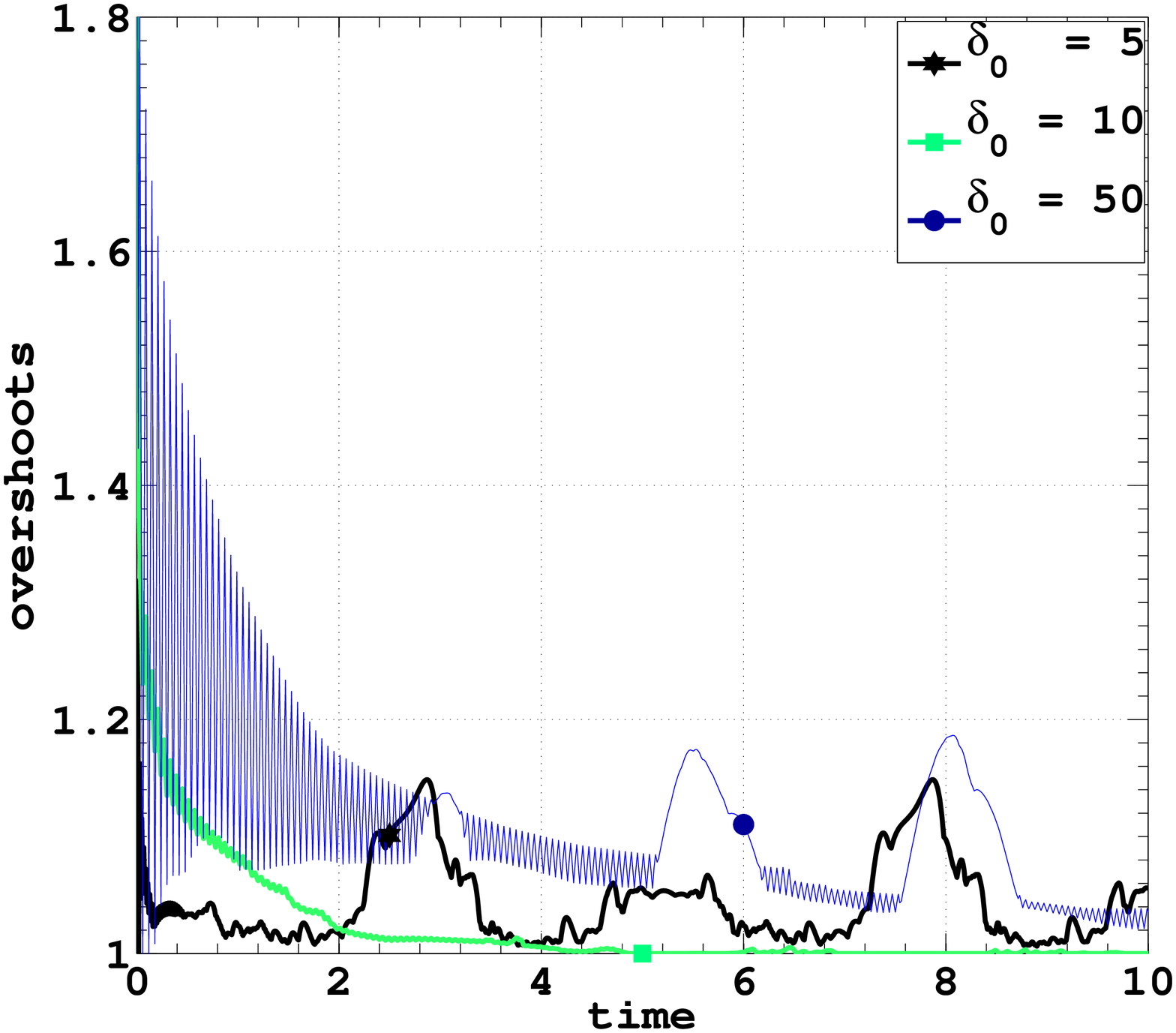}}}
\put(2,5.5){(a)}
\put(9,5.5){(b)}
\end{picture}
\end{center}
\caption{The observed overshoots in the SUPG solution of Example~2 for different values of $\delta_0$. Implicit Euler (a), and Crank-Nicolson (b). \label{Ex2Over}}
\end{figure}
As observed in the previous example, the undershoots and overshoots are less in the Euler's method (note that the scaling of figures are different). For both the Euler and Crank-Nicolson methods, the choice of $\delta_0= 10$ suppresses the undershoots and overshoots more or less. 
Nevertheless, the oscillations can further be suppressed by varying (increasing) $\delta_0$. However, the smearing effect will be more when a large value of  $\delta_0$ is used. Moreover, the plots of the undershoots and overshoots provide only an indication for the choice of $\delta_0$ to suppress the spurious oscillations in the numerical solution.  Here, the smearing effect of $\delta_0$ in the numerical solution is not visible in Figure~\ref{Ex2Under} and~\ref{Ex2Over}. Therefore, to analyze the smearing effect, the obtained ALE-SUPG solution over the line $y=0$ for different values of $\delta_0$ at time $t=10$ are plotted  in Figure~\ref{Ex2y0}.
 \begin{figure}[ht!]
\begin{center}
\unitlength1cm
\begin{picture}(11.5,6.)
\put(-1.,-0.5){\makebox(6,6){\includegraphics[scale=0.25]{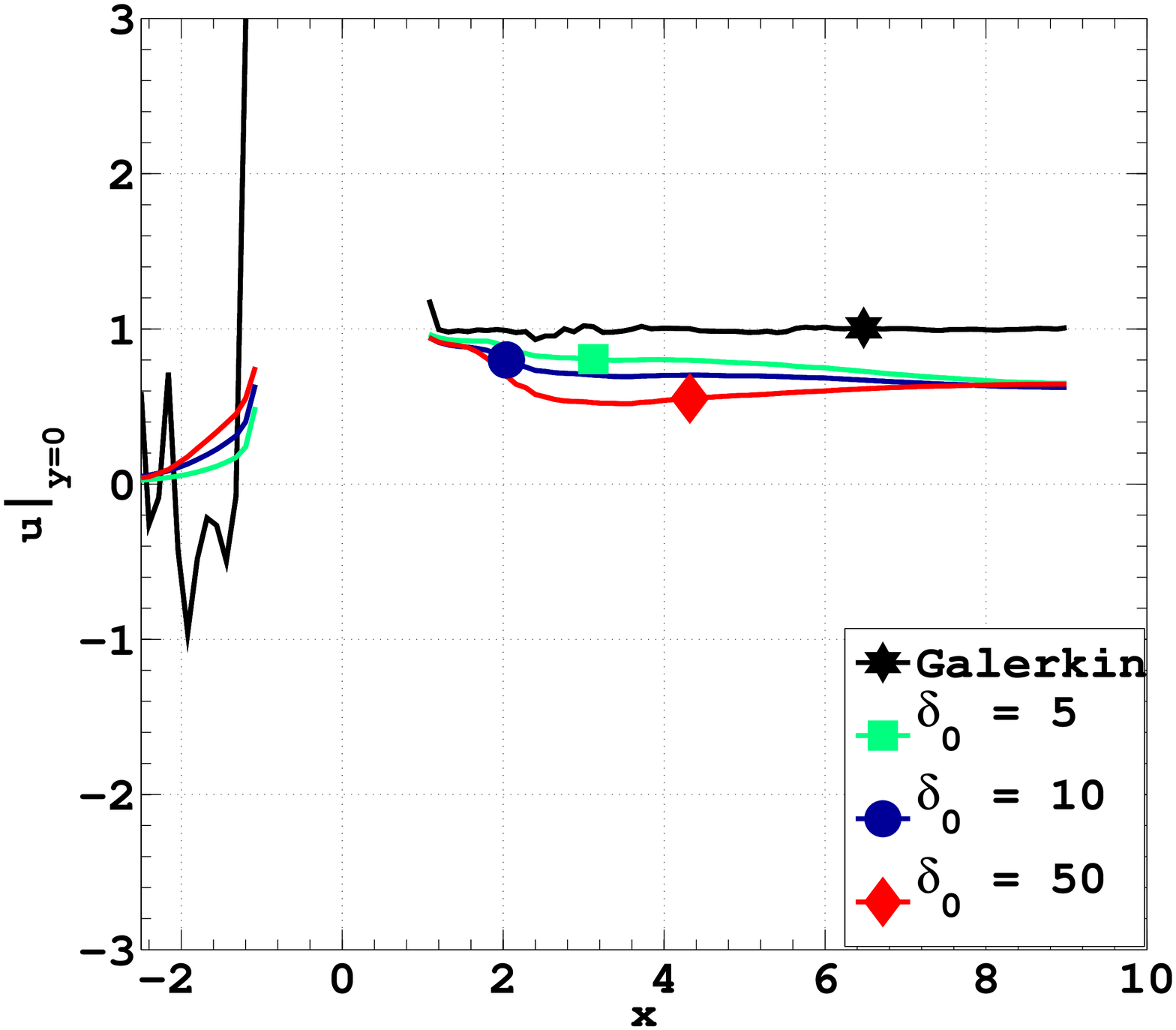}}}
\put(6,-0.5){\makebox(6,6){\includegraphics[scale=0.25]{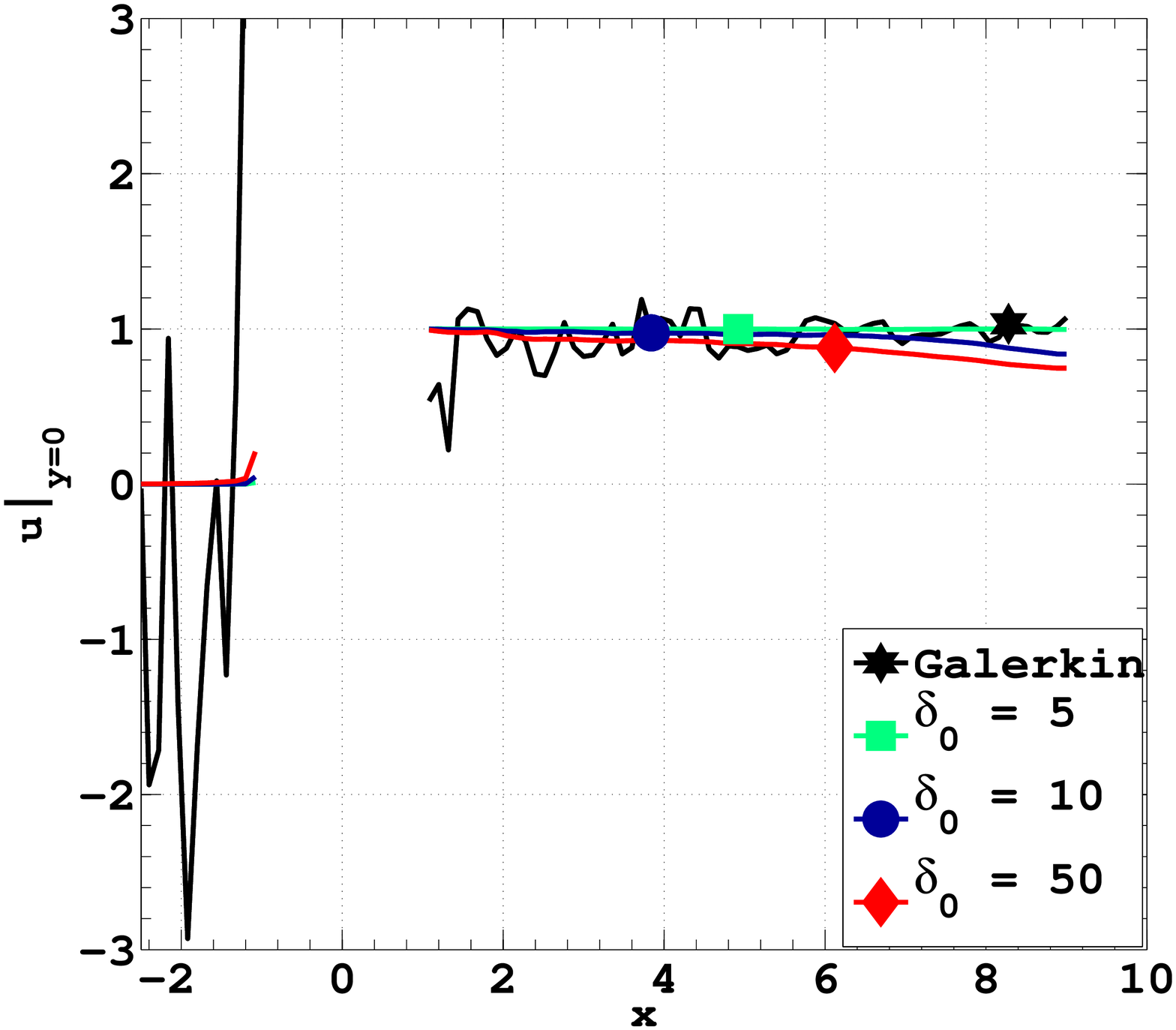}}}
\put(2,5.5){(a)}
\put(9,5.5){(b)}
\end{picture}
\end{center}
\caption{SUPG solution over the line $y=0$ of the Example~2 for different values of $\delta_0$. Implicit Euler (a), and Crank-Nicolson (b). \label{Ex2y0}}
\end{figure}
Based on these observations, we choose $\delta_0=10$ as an optimal value. Next, the  surface plot of the SUPG solution at different instances are plotted in Figure~\ref{Ex2SupgEuler} and  Figure~\ref{Ex2SupgCrank}. Even though, the SUPG approximation suppressed the spurious oscillations in the numerical solution almost, there are very small undershoots and overshoots (approximately $10 \%$) for the chosen $\delta_0=10$.  We could reduce these undershoots and overshoots by increasing   $\delta_0$ further, however, it will smear the solution. This is a well known behavior of the SUPG method in stationary domains. Nevertheless, the oscillations in the solution are suppressed.
\begin{figure}[ht!]
\begin{center}
\unitlength1cm
\begin{picture}(14,11)
\put(0,3.8){\makebox(6,6){\includegraphics[width=7cm]{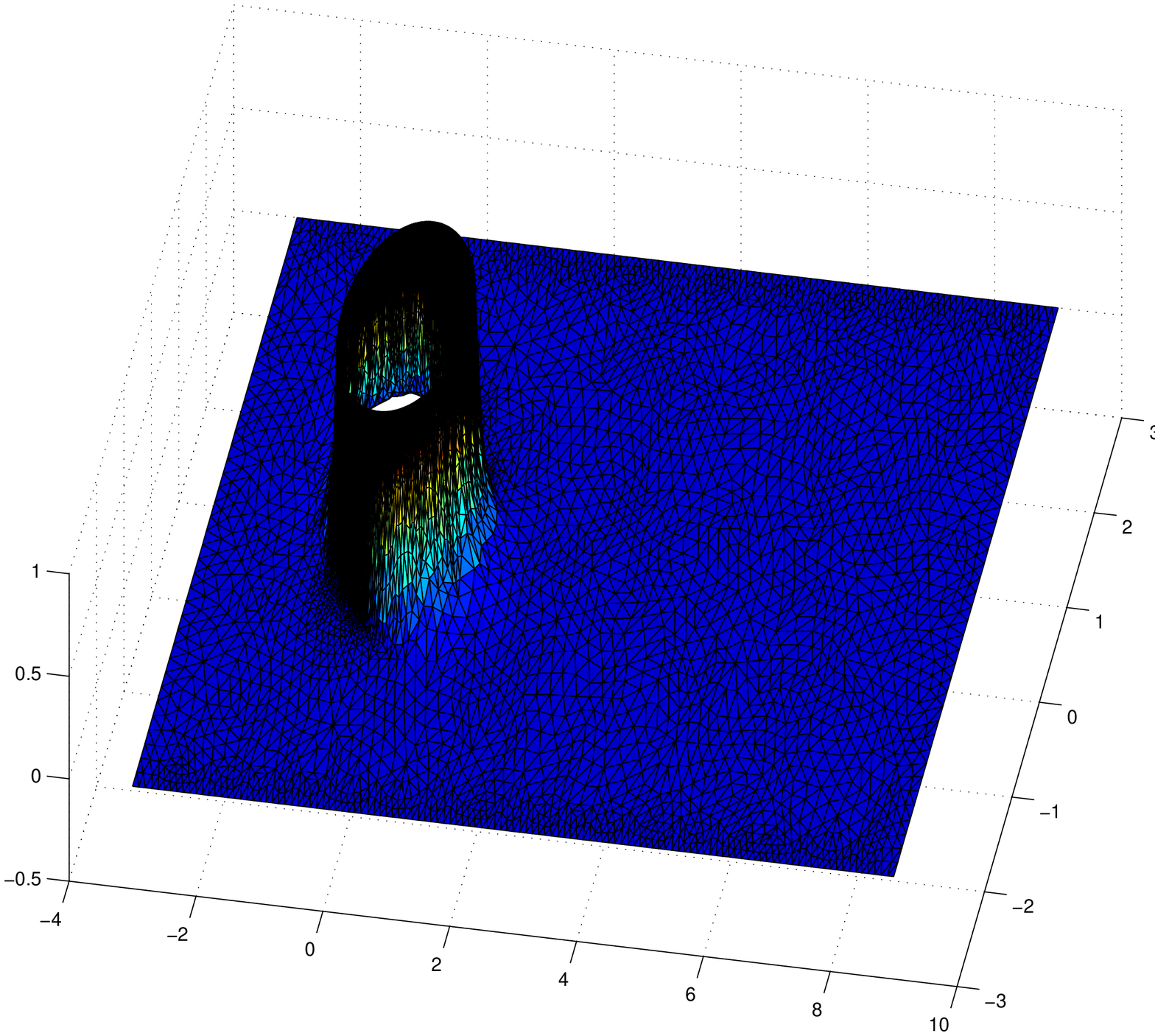}}}
\put(1.21,5.75){\makebox(6,6){\includegraphics[width=3.3cm]{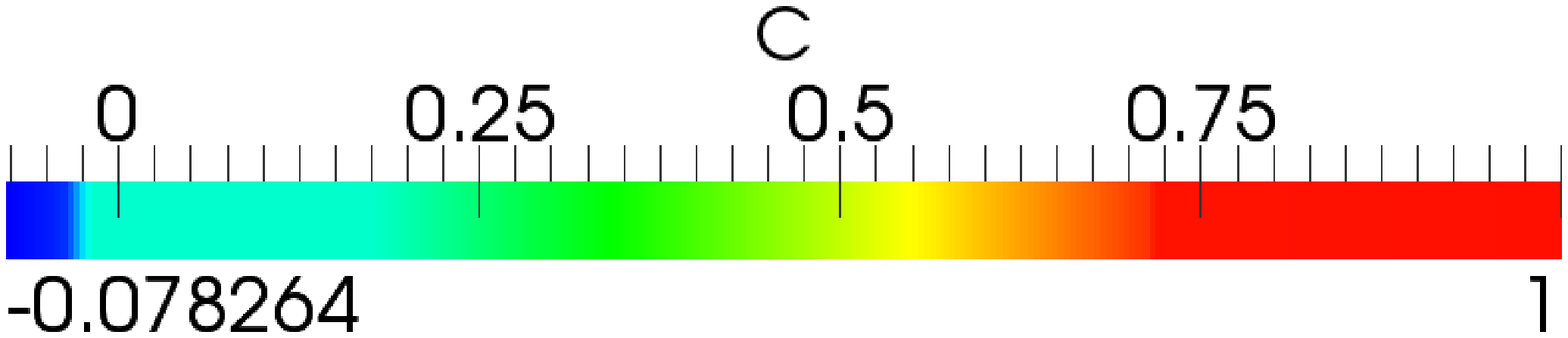}}}
\put(7.5,3.8){\makebox(6,6){\includegraphics[width=7cm]{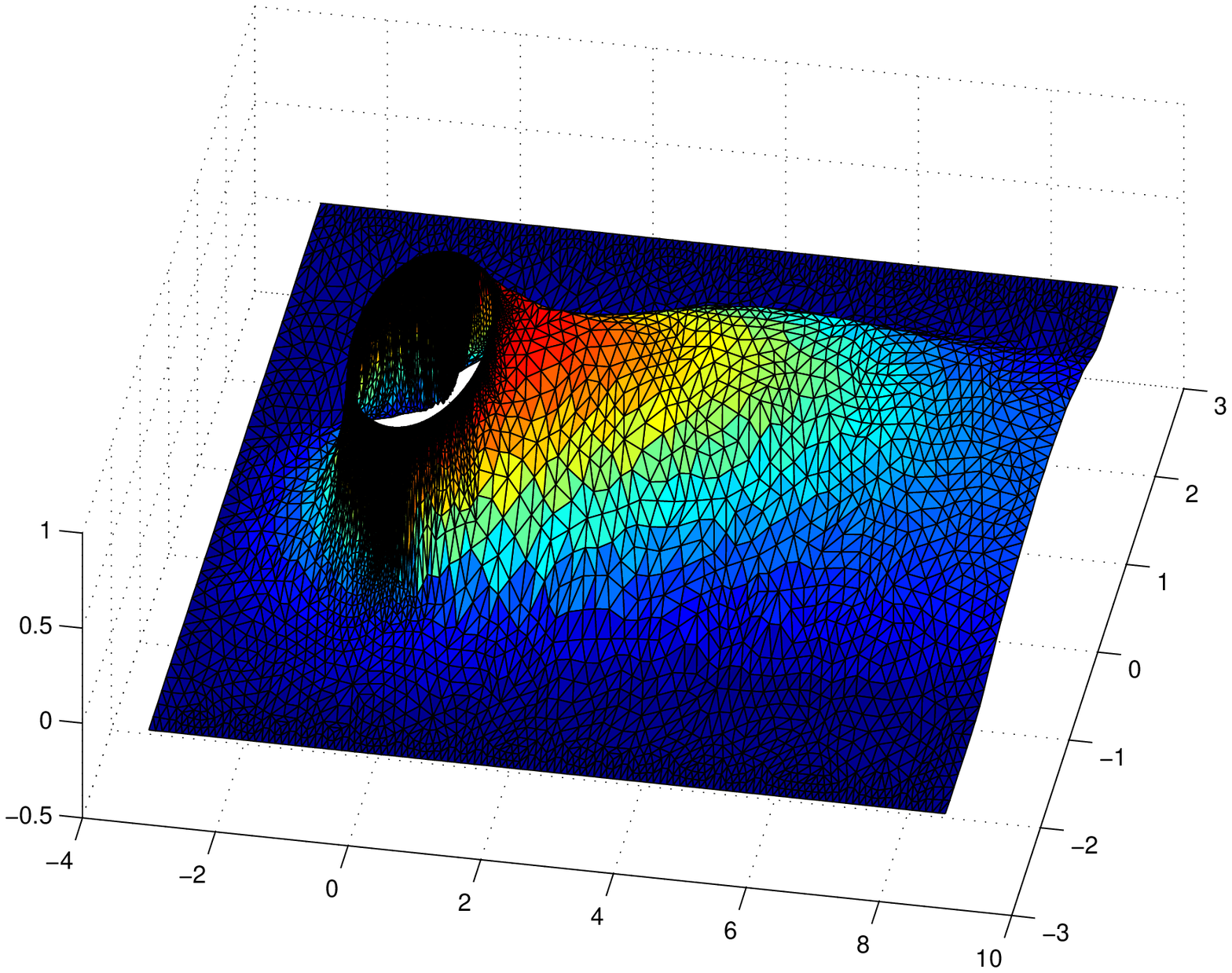}}}
\put(8.71,5.5){\makebox(6,6){\includegraphics[width=3.3cm]{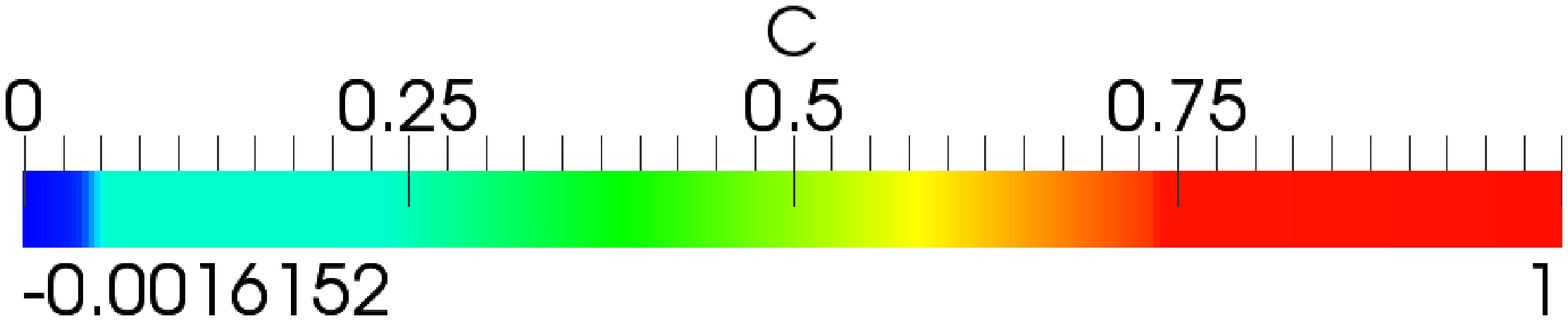}}}
\put(0,-1.5){\makebox(6,6){\includegraphics[width=7cm]{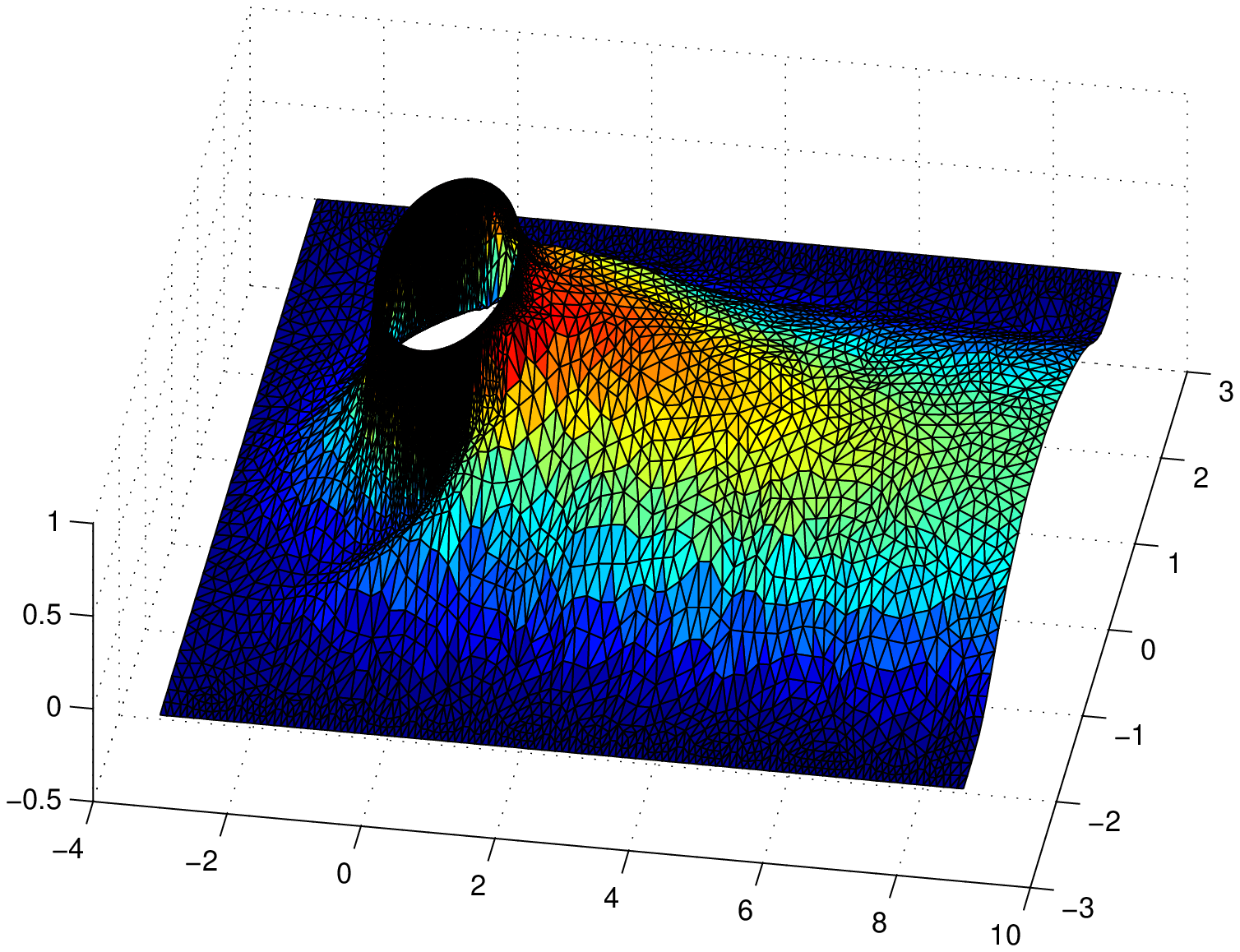}}}
\put(1.27,0.2){\makebox(6,6){\includegraphics[width=3.3cm]{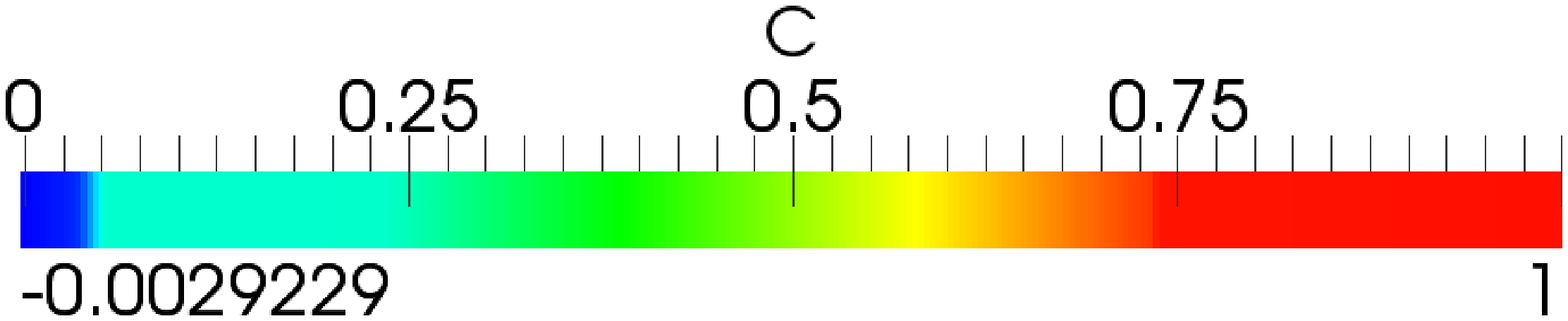}}}
\put(7.5,-1.5){\makebox(6,6){\includegraphics[width=7cm]{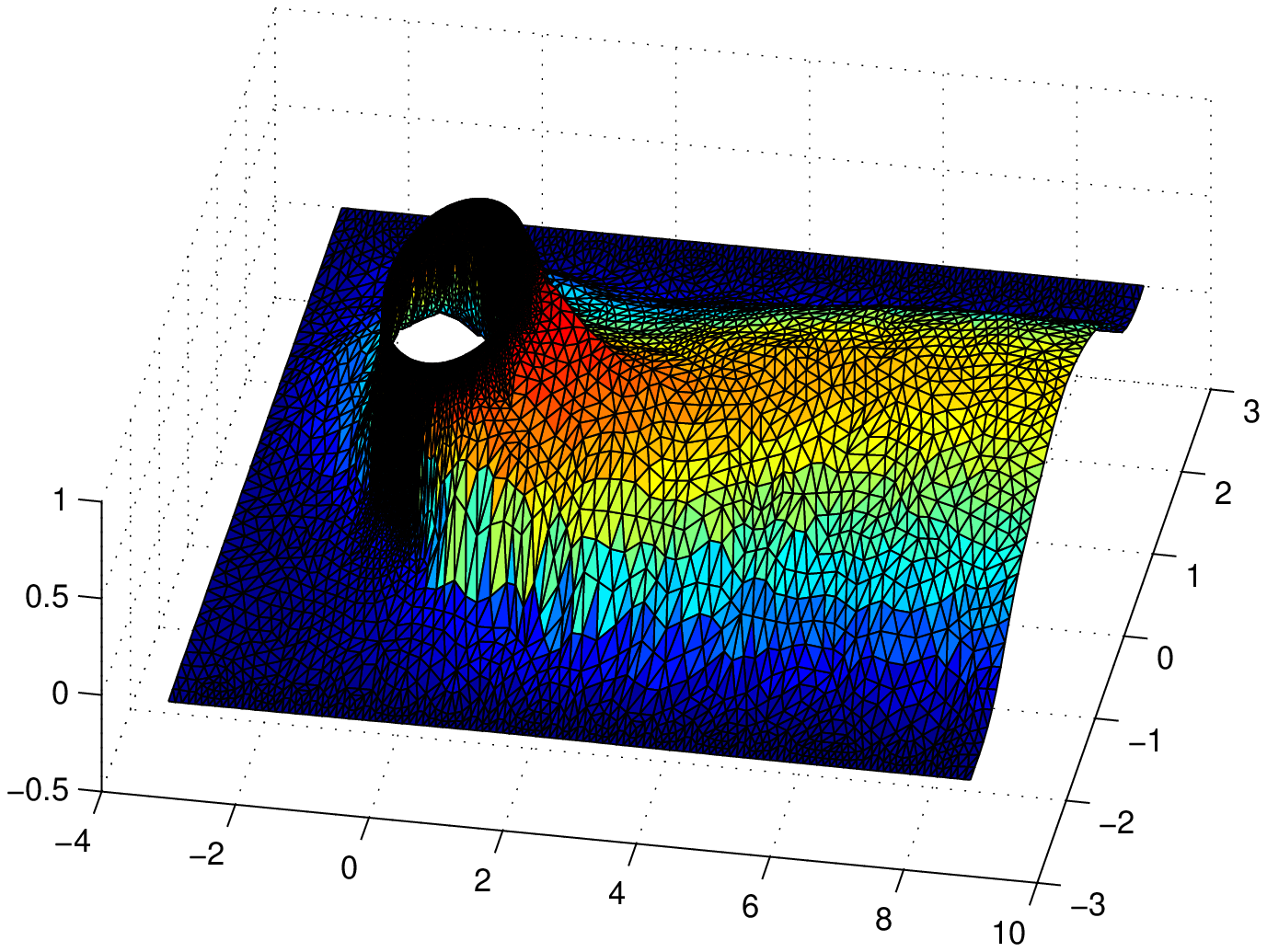}}}
\put(8.73,0.20){\makebox(6,6){\includegraphics[width=3.3cm]{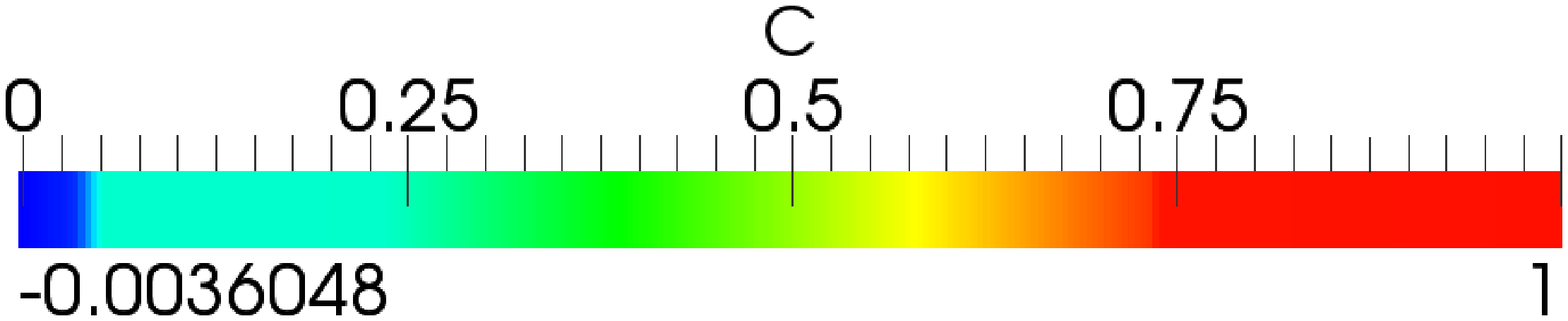}}}
\end{picture}
\end{center}
\caption{The sequence of solutions obtained for the Example 2 with the SUPG $\delta_0 = 0.1$ method for Implicit Euler case at different instance $t = 0.05, 4, 7, 10 $. \label{Ex2SupgEuler}}
\end{figure}

\begin{figure}[t!]
\begin{center}
\unitlength1cm
\begin{picture}(14,11)
\put(0,3.8){\makebox(6,6){\includegraphics[width=7cm]{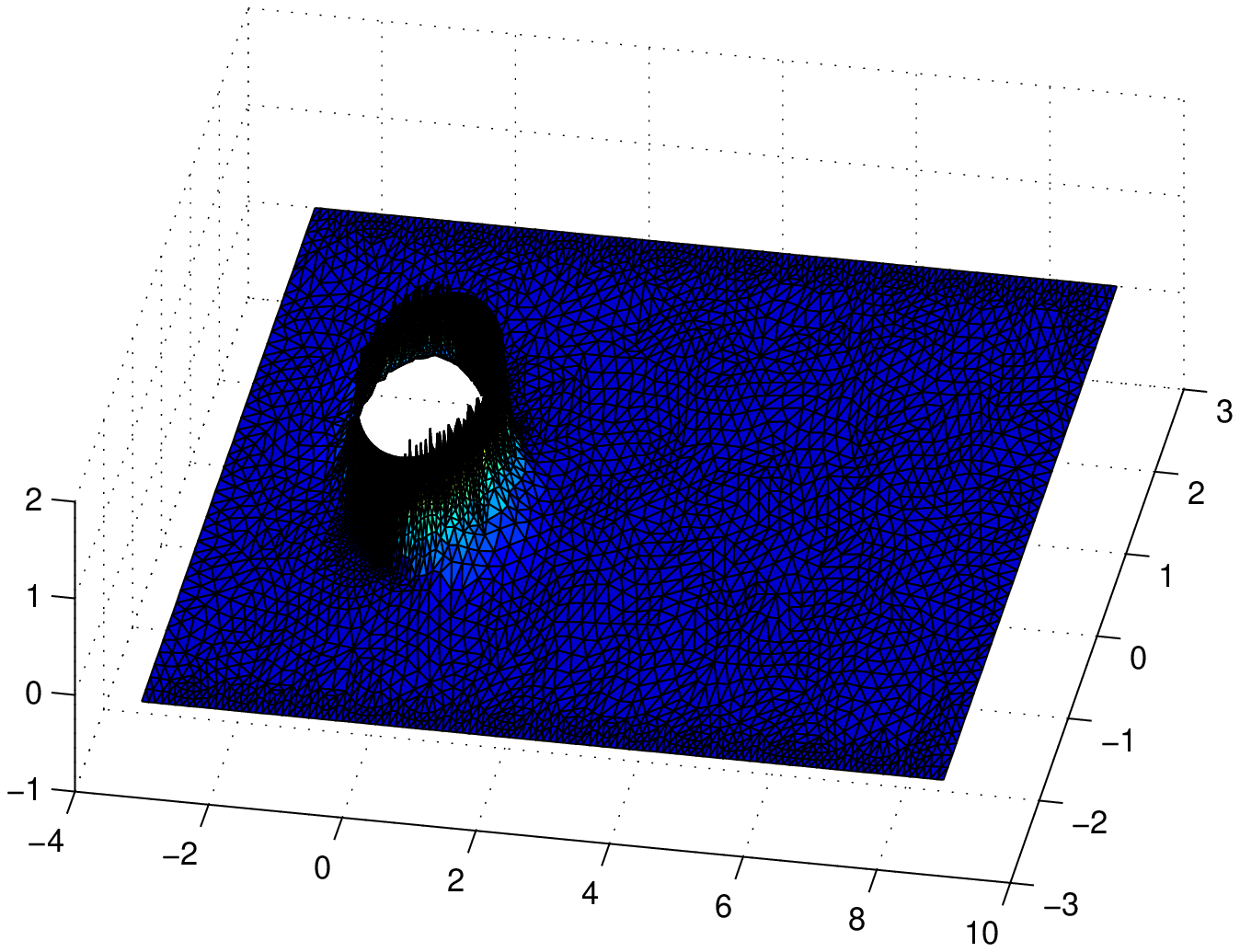}}}
\put(1.2,5.4){\makebox(6,6){\includegraphics[width=3.3cm]{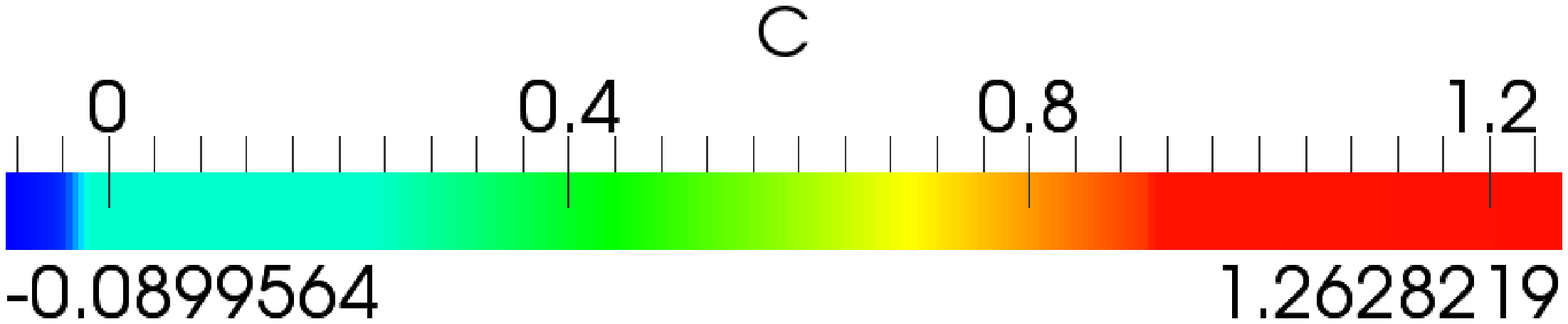}}}
\put(7.5,3.8){\makebox(6,6){\includegraphics[width=7cm]{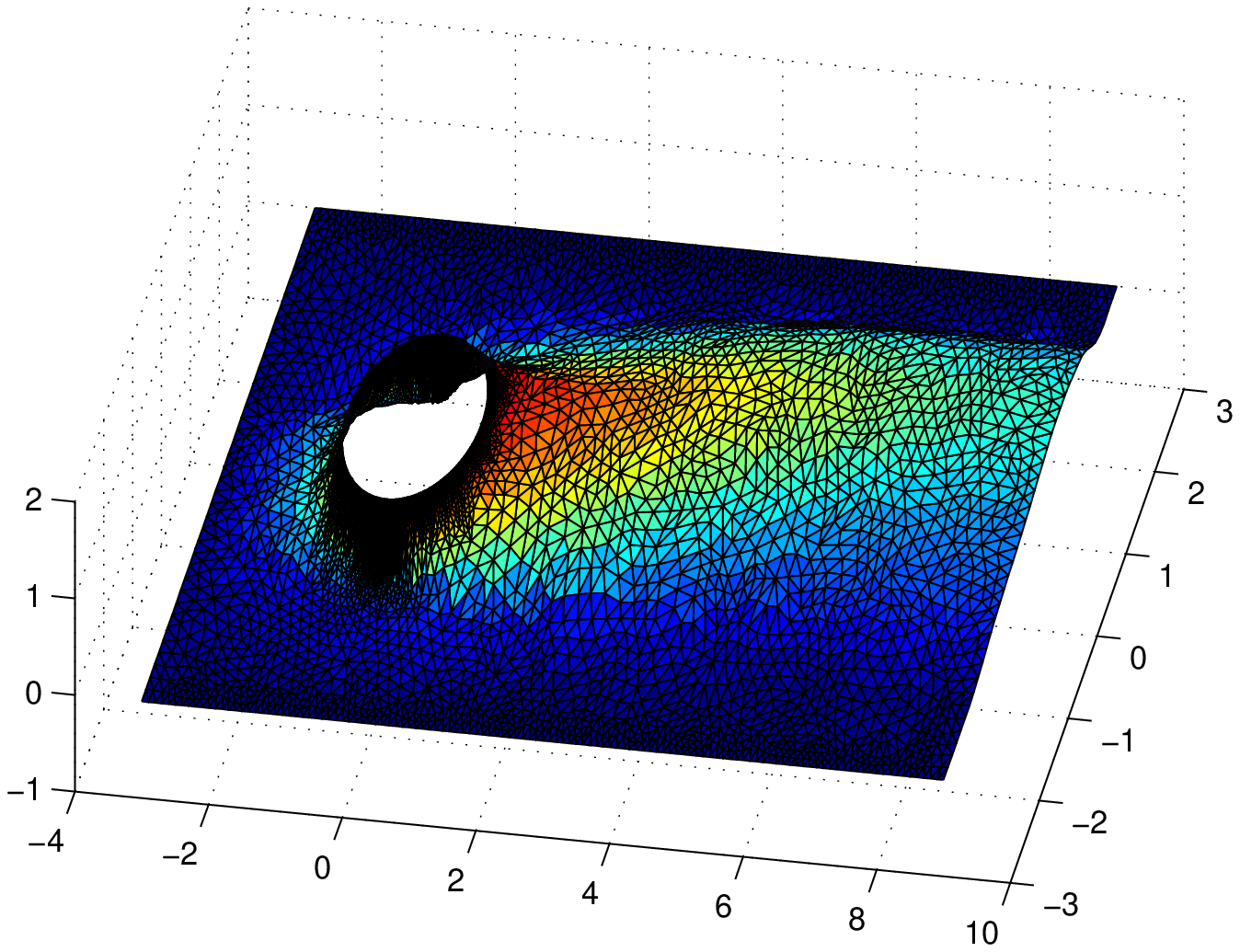}}}
\put(8.71,5.4){\makebox(6,6){\includegraphics[width=3.3cm]{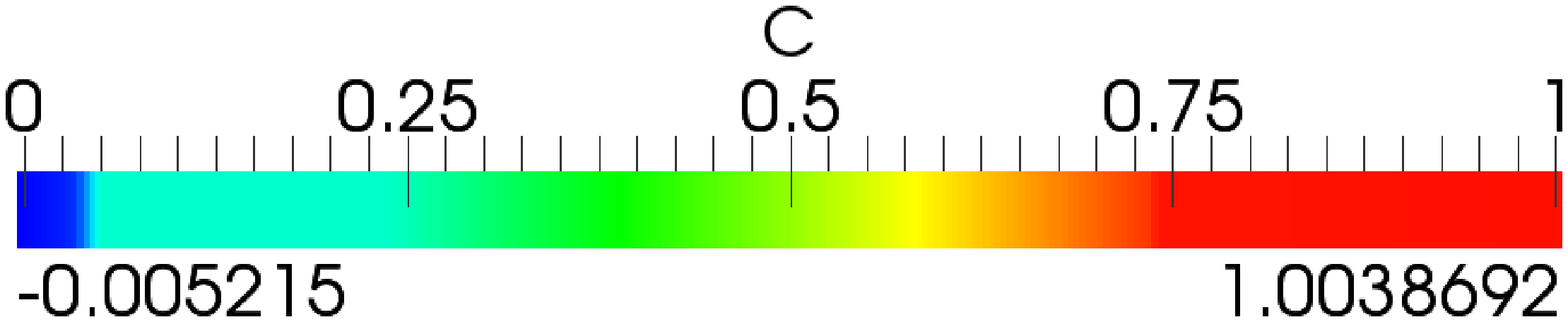}}}
\put(0,-1.5){\makebox(6,7){\includegraphics[width=7cm]{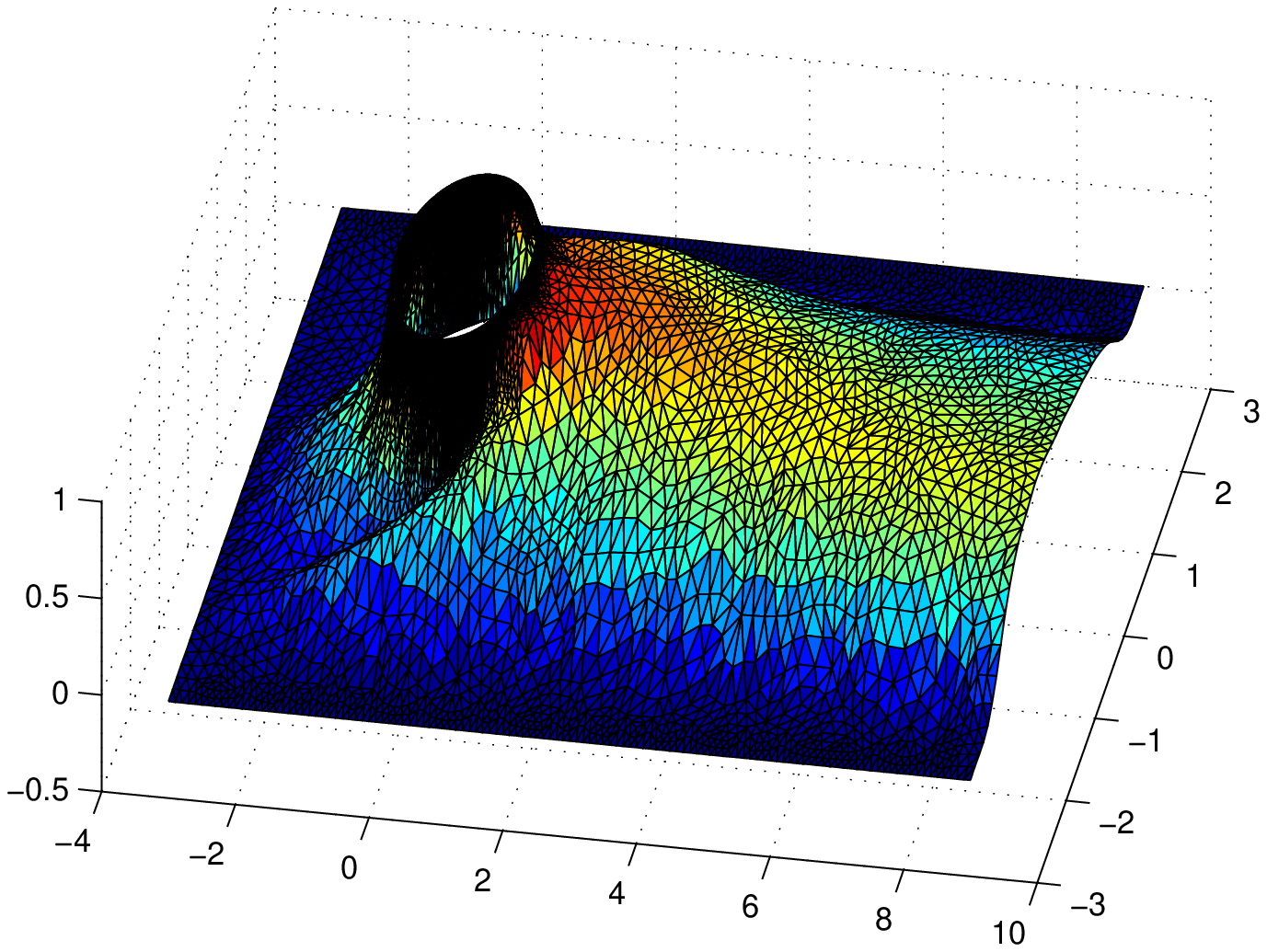}}}
\put(1.25,.60){\makebox(6,6){\includegraphics[width=3.3cm]{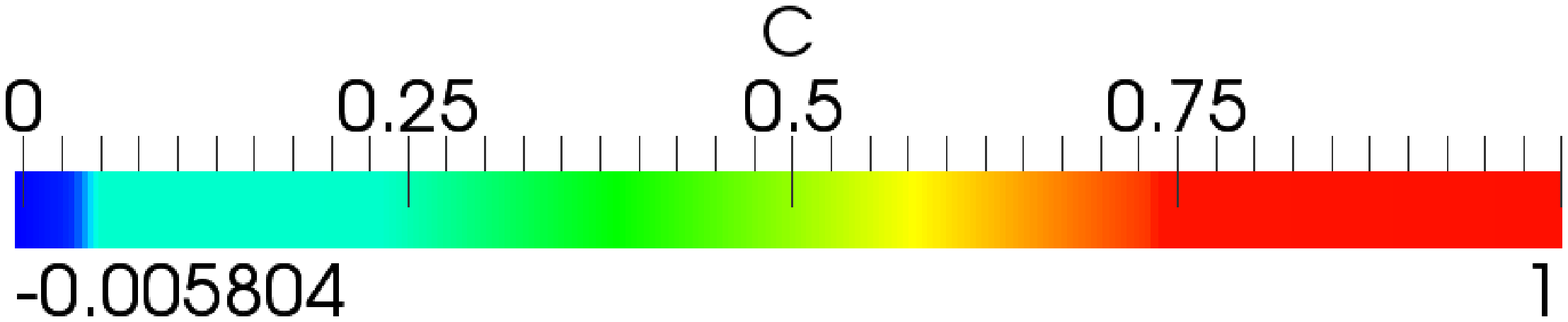}}}
\put(7.5,-1.5){\makebox(6,7){\includegraphics[width=7cm]{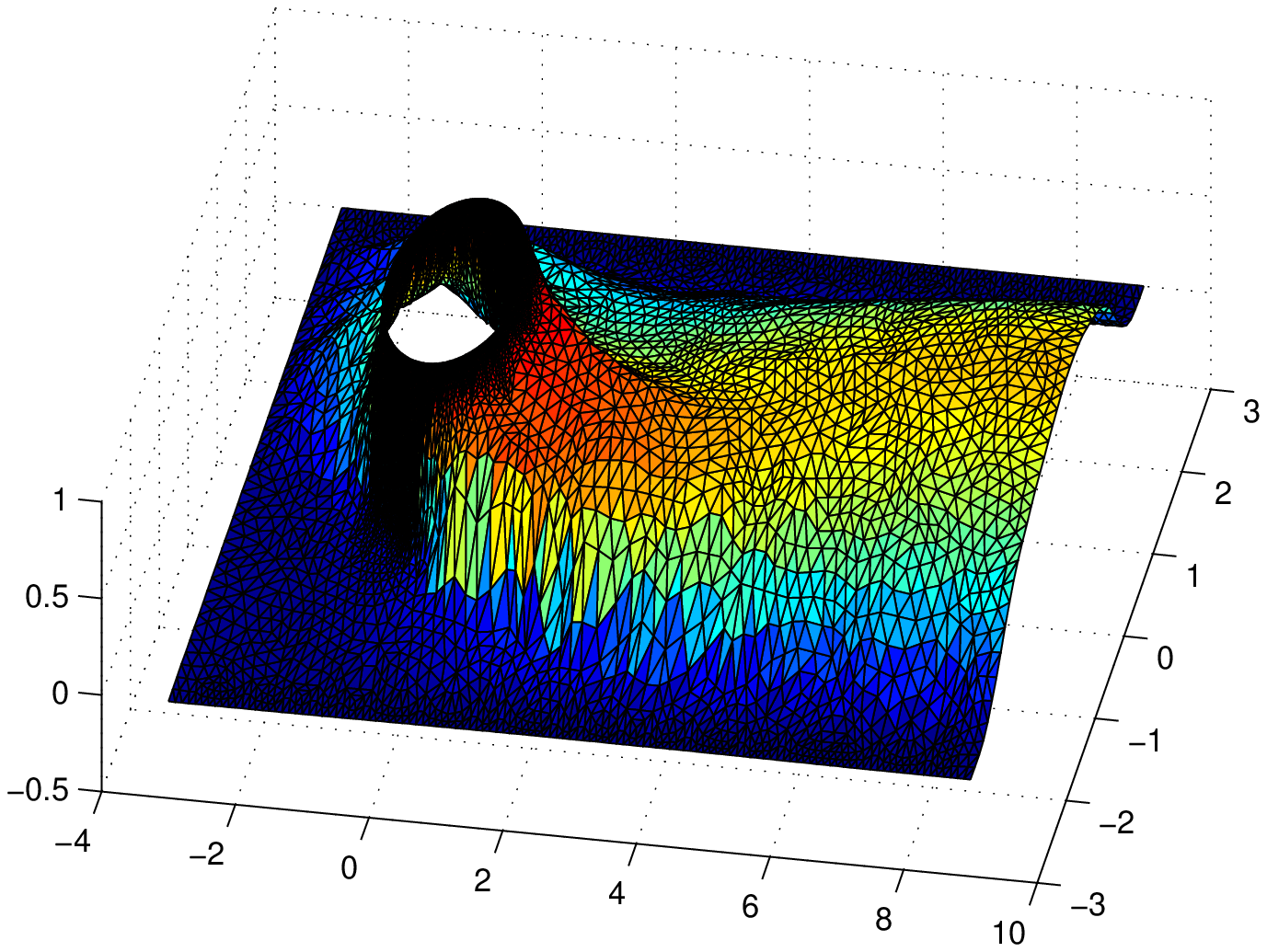}}}
\put(8.7,-.250){\makebox(6,8){\includegraphics[width=3.3cm]{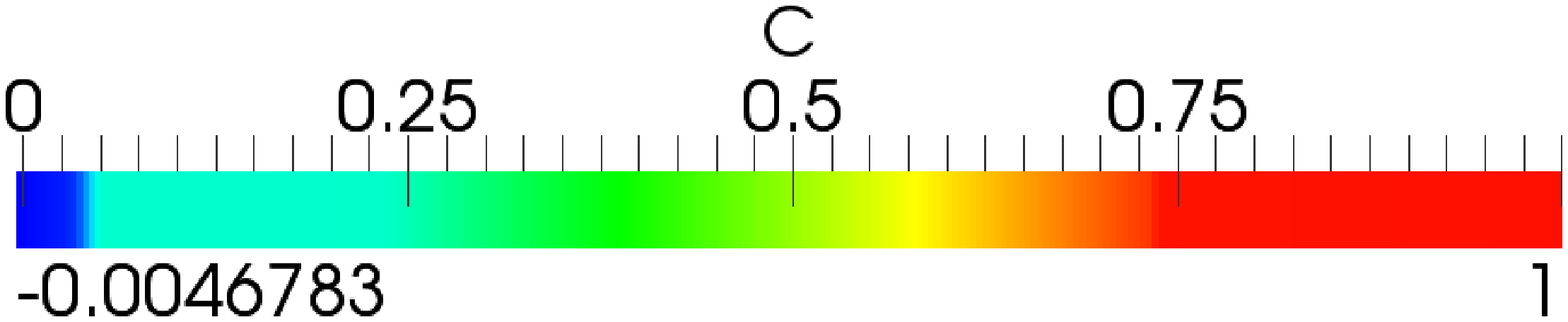}}}
\end{picture}
\end{center}
\caption{The sequence of solutions obtained for the Example 2 with SUPG discretization with $\delta_0 = 0.1$ for Crank-Nicolson method at different instance $t = 0.05, 4, 7, 10$. \label{Ex2SupgCrank}}
\end{figure}

 \section{Summary} In this work, a stabilized numerical scheme for  a transient scalar equation in a time-dependent domain is proposed. In particular, a conservative ALE-SUPG finite element method is analyzed for a convection dominated transient equation in a moving domain. The stability estimates of conservative ALE-SUPG finite element method with the backward Euler and Crank-Nicolson temporal discretizations are derived. 
 The SUPG finite element solution coincides with the standard Galerkin solution for a small value of the stabilization parameter when the convection term is zero or not dominant. 
 The main purpose of the proposed numerical scheme is to approximate the solution of a convection dominant equation in a time-dependent domain where the standard Galerkin method fails or induce spurious oscillations. The robustness of proposed conservative ALE-SUPG is demonstrated with   appropriate examples.

  \bibliographystyle{elsarticle-num} 
 \bibliography{masterlit}

%
%
%
\end{document}